\documentclass[11pt]{article}
\usepackage[a4paper,margin=1in]{geometry}
\usepackage{amsmath,amssymb,amsthm,mathtools}
\usepackage{enumitem}
\usepackage{microtype}
\usepackage[hidelinks]{hyperref}
\usepackage[nameinlink,noabbrev]{cleveref}

\newtheorem{theorem}{Theorem}[section]
\newtheorem{proposition}[theorem]{Proposition}
\newtheorem{lemma}[theorem]{Lemma}
\newtheorem{corollary}[theorem]{Corollary}
\newtheorem{problem}[theorem]{Problem}
\newtheorem{ASdescent}[theorem]{Artin--Schreier descent lemma}
\theoremstyle{remark}
\newtheorem{remark}[theorem]{Remark}

\newcommand{\Aut}{\operatorname{Aut}}
\newcommand{\Res}{\operatorname{Res}}
\newcommand{\Pic}{\operatorname{Pic}}
\newcommand{\Dih}{\operatorname{Dih}}
\newcommand{\F}{\mathbb F}
\newcommand{\doi}[1]{\href{https://doi.org/#1}{doi:#1}}

\title{Nakajima-extremal Artin--Schreier covers of ordinary elliptic curves in characteristic $2$}
\author{Saeed Tafazolian}
\date{}

\begin{document}
\maketitle

\begin{abstract}
Let $k$ be an algebraically closed field of characteristic $2$ and let
$q=2^h$, $h\ge3$.  We construct ordinary bielliptic curves $X$ of genus
$q+1$ for which
\[
        \Aut(X)\cong \Dih(C_q)\times C_2,
        \qquad |\Aut(X)|=4q=4(g(X)-1).
\]
These curves realize case \textup{(ib)} in the classification of
Giulietti--Korchm\'aros and give an infinite family answering a problem
posed by Korchm\'aros.  More generally, we prove that every curve in case
\textup{(ib)} arises from the same construction.  Case \textup{(ib)}
occurs exactly in genera $g=2^h+1$ with $h\ge2$.  For $g\ge9$ we determine
the full automorphism group; in genus $5$ we determine its Sylow
$2$-subgroup but do not claim the full automorphism group.  For every fixed
$q\ge8$, the isomorphism classes in case
\textup{(ib)} of genus $q+1$ are parametrized bijectively by
$(k^\times)^2$.  The construction is described in terms of an ordinary
elliptic curve and an invariant differential.  We determine
the short orbits and ramification, the unramified cyclic quotients, and
the quotients by the central involutions.  For $q=8$ an explicit plane
model over $\F_2$ is given.
\end{abstract}

\section{Introduction}

Throughout the paper, $k$ is an algebraically closed field of
characteristic $2$.  Let $X$ be a curve of genus $g\ge2$, let $\gamma$
be its $2$-rank, and let $S$ be a $2$-subgroup of $\Aut(X)$.  We consider
the hypotheses
\[
\begin{array}{ll}
\textup{(I)}  & |S|\ge8,\qquad |S|>2(g-1),\\[2mm]
\textup{(II)} & S\text{ fixes no point of }X.
\end{array}
\]
Giulietti and Korchm\'aros determined the possibilities for $(X,S)$
under these assumptions \cite{GK2015}.  A central involution $v\in Z(S)$
is called \emph{inductive} if $X/\langle v\rangle$ has genus at least $2$
and the induced group $S/\langle v\rangle$ again satisfies
\textup{(I)} and \textup{(II)}.  We recall their classification theorem.

\begin{theorem}[Giulietti--Korchm\'aros {\cite[Theorem~1.2]{GK2015}}]
\label{thm:GKclassification}
Assume that $S$ satisfies \textup{(I)} and \textup{(II)}.  Then one of
the following cases occurs.
\begin{enumerate}[label=\textup{(\roman*)}]
\item
\[
        |S|=4(g-1),\qquad \gamma=g,
\]
and $X$ is bielliptic.  Moreover, either
\begin{enumerate}[label=\textup{(i\alph*)}]
\item $S$ is dihedral and has no inductive central involution; or
\item
\[
        S=(C_0\times\langle u\rangle)\rtimes\langle w\rangle,
\]
where $C_0$ is cyclic of order $g-1$, $u$ is a central involution,
$w$ is an involution, and $S/\langle u\rangle$ is dihedral.  Moreover,
$S$ has exactly two central inductive involutions, both contained in
$C_0\times\langle u\rangle$.
\end{enumerate}
\item One has $\gamma=g$ and
\[
        |S|=2g+2,
\]
and $S=D\rtimes\langle w\rangle$, where $D$ is an elementary abelian
subgroup of index $2$ and $w$ is an involution.  If $S$ is abelian,
then it is elementary abelian and $X$ is hyperelliptic.
\item Every central involution of $S$ is inductive.
\end{enumerate}
\end{theorem}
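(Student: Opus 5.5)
The classification is quoted from \cite{GK2015}, so the plan is to recover its proof by induction on $|S|$ along central involutions. Everything runs through the Deuring--Shafarevich formula for $2$-groups acting on curves in characteristic $2$,
\[
  \gamma-1=|S|(\bar\gamma-1)+\sum_{P}\bigl(|S_P|-1\bigr),
\]
where $\bar\gamma$ is the $2$-rank of $X/S$ and the sum runs over representatives of the short orbits. We combine it with the Nakajima bound $|S|\le 4(g-1)$ for ordinary curves and, in general, $|S|\le \max\{4(g-1),\,2g+2,\dots\}$ together with the Stöhr--Voloch/Nakajima refinement for $\gamma<g$.

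Hypothesis \textup{(I)} gives $|S|>2(g-1)$. This forces $X/S$ to be rational and $\bar\gamma=0$, since otherwise Riemann--Hurwitz gives $2g-2\ge |S|(2\bar g-2)+\dots\ge 0$ plus wild contributions, which is too large. Hypothesis \textup{(II)} says every point stabiliser is proper. Plugging this into Deuring--Shafarevich shows there are at most two short orbits, with stabilisers $S_1,S_2$ of order $\ge2$. A case count then gives either $\gamma-1=-|S|+|S|(1-1/|S_1|)+|S|(1-1/|S_2|)$ with $|S_1|=|S_2|=2$, so $\gamma-1=0$; this is impossible, so the short orbits must be arranged differently. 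The refined count yields the two extremal equalities $|S|=4(g-1)$ with $\gamma=g$, or $|S|=2g+2$ with $\gamma=g$.

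Next fix a central involution $v\in Z(S)$ and pass to $\bar X=X/\langle v\rangle$ with $\bar S=S/\langle v\rangle$. Compute $\bar g$ and $\bar\gamma$ through Riemann--Hurwitz and Deuring--Shafarevich for the double cover $X\to\bar X$, whose ramification consists of fixed points of $v$, grouped in $S$-orbits. If some $v$ fails to be inductive, then either $\bar g\le1$ or $\bar S$ violates \textup{(I)} or \textup{(II)}.
\begin{itemize}
\item If $\bar g=1$, then $X$ is bielliptic, the double cover is unramified or has few branch points, and we get $|S|=4(g-1)$, which is case (i).
\item If $\bar g=0$, then $X$ is hyperelliptic with $v$ the hyperelliptic involution, and $|S|=2g+2$ with the elementary abelian index-$2$ subgroup structure, which is case (ii).
\item If $\bar S$ fixes a point of $\bar X$, the orbit analysis again lands in (i) or (ii).
\end{itemize}
If no such $v$ exists, we are in case (iii).

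In case (i), the finer dichotomy comes from the structure of $2$-groups acting on a bielliptic ordinary curve with $|S|=4(g-1)$. The group $S$ is generated by two involutions whose product has order $g-1$ or $2(g-1)$ modulo the kernel. The group is dihedral exactly when that product has order $2(g-1)$; otherwise the extension splits as $(C_0\times\langle u\rangle)\rtimes\langle w\rangle$. In the latter case we count the central involutions and check inductivity via the quotient genus.

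The main obstacle is the group-theoretic identification in (ib). It requires showing that the central involutions, one in $C_0$ and $u\cdot(\text{that one})$, are exactly the two inductive ones, while $u$ itself is non-inductive. Here I would compute the fixed-point counts of each central involution from the short-orbit stabilisers.
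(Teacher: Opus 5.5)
The paper does not prove this statement. It is quoted from \cite[Theorem~1.2]{GK2015}, and the finer identification of the involutions in (ib) (Proposition~\ref{prop:GK-refinement}) is likewise read off from the proof of \cite[Theorem~4.3]{GK2015}. Your reconstruction therefore has to stand on its own. Its logical shape is right: if every central involution is inductive you are in (iii), and otherwise you analyse a non-inductive central $v$. The counting step, however, is false.

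With $X/S\cong\mathbf P^1$ (your Riemann--Hurwitz argument for this is fine), the Deuring--Shafarevich formula is $\gamma-1=-|S|+\sum_\Omega(|S|-|\Omega|)$. Your version, summing $|S_P|-1$ over orbit representatives, loses the factor $|\Omega|$. Hypothesis (II) then gives exactly two short orbits and $\gamma-1=|S|\bigl(1-1/|S_1|-1/|S_2|\bigr)$. Hypothesis (I) together with $\gamma\le g$ forces only $|S_1|=2$; the order $|S_2|$ and the higher ramification groups remain free. So no ``refined count'' can produce $|S|=4(g-1)$ or $|S|=2g+2$. For example, take the Artin--Mumford curve $(x^8+x)(y^8+y)=1$ of genus $49$, with $S$ generated by $(x,y)\mapsto(x+\alpha,y+\beta)$, $\alpha,\beta\in\F_8$, and $(x,y)\mapsto(y,x)$. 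It satisfies (I) and (II) with $|S|=128$ and short orbits of lengths $16$ and $64$. Such configurations are what case (iii) absorbs, and the two extremal equalities can only be extracted from a non-inductive $v$. Also, $|S_1|=|S_2|=2$ is excluded not by ``$\gamma=1$'' but by Riemann--Hurwitz: $2g-2=|S|(m_1+m_2-2)/2$ with odd lower jumps $m_i$, which under (I) forces $g=1$.

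The analysis of a non-inductive $v$ is the real content of (i) and (ii), and your bullets state its outcome without argument. One case is also missing, namely $\bar S$ violating (I). Riemann--Hurwitz gives $2g-2\ge4(\bar g-1)$, so the strict inequality passes to $\bar S$, and failure happens only when $|S|=8$ and $g\le4$. Such cases (e.g.\ $g=3$, where $4(g-1)=2g+2$) must be placed by hand. For the same reason, your plan to show that $a$ and $au$ are inductive fails in genus $3$, where $|S/\langle a\rangle|=4$ (cf.\ Proposition~\ref{prop:no-genus3}).

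For $\bar g=1$, write $u=v$; the cover cannot be unramified since $g\ge2$. You must first exclude supersingular $\bar X$. There $\bar S$ contains no translations, and since $\bar X(k)$ is uniquely $2$-divisible, $\bar S$ fixes a point. By (II) this gives an $S$-orbit of length $2$, after which Deuring--Shafarevich and Riemann--Hurwitz give $|S|\le2(g-1)$. For ordinary $\bar X$, the translation subgroup $N\le\bar S$ is cyclic of index at most $2$ and acts freely on the branch set $B$, so $|N|$ divides $r=|B|$. Then $2r\le2g-2<|S|\le4|N|$ forces $|N|=r$ and $|S|=4r$. Now $B$ is one orbit, $2g-2=rd$ with $d$ even, and $4r>rd$ gives $d=2$, $r=g-1$, $\gamma=g$.

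For the split into (ia)/(ib), ``$S$ is generated by two involutions'' is, taken literally, false in (ib), since $\Dih(C_q)\times C_2$ has abelianization $C_2^3$. The correct dichotomy is whether the preimage $\tilde N$ of $N$ is cyclic of order $2(g-1)$ or $C_{g-1}\times\langle u\rangle$. The fact that determines $S$ in both cases is absent from your plan: every element of $S\setminus\tilde N$ is an involution. Every element of $\bar S\setminus N$ is some $\tau_Q\circ[-1]$, which has fixed points. So one of its two lifts fixes a point of $X$ and has order $2$, because point stabilizers on the ordinary curve $X$ are elementary abelian (Lemma~\ref{lem:ordinary-weak}). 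The other lift differs by the central $u$, so it also has order $2$. This excludes the semidihedral and generalized quaternion groups in the cyclic case, and the twisted action $wcw=c^{-1}u$ in the split case (Corollary~\ref{cor:ib-group}).
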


\begin{proposition}[Refinement of case \textup{(ib)} in \cite{GK2015}]
\label{prop:GK-refinement}
In case \textup{(ib)}, let $a$ be the unique involution of $C_0$.
Then $a$ and $au$ are the two inductive central involutions of $S$, while
$u$ is non-inductive; in fact
\[
        g(X/\langle u\rangle)=1.
\]
\end{proposition}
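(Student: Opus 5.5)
The plan is to compute, for each of the three central involutions $a,u,au$ of $S$ lying in $C_0\times\langle u\rangle$, the number of fixed points on $X$. From that count we read off the genus of the quotient. We then use the structure of the quotient groups to decide which involution gives an elliptic quotient.

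\emph{Step 1: short orbits.} Since $\gamma=g$, the curve $X$ is ordinary. For a $2$-group acting on an ordinary curve, the second ramification groups are trivial, so every point stabilizer in $S$ is elementary abelian. Apply Riemann--Hurwitz to $X\to X/S$ with $|S|=4(g-1)$, and write $e_i$ for the orders of the stabilizers of the short orbits:
\[
2g-2=4(g-1)\Bigl(2g_0-2+2\sum_i(1-1/e_i)\Bigr).
\]
This forces $g_0=0$ and $\sum_i(1-1/e_i)=5/4$. Hypothesis (II) rules out a stabilizer equal to all of $S$. The only solution is then $(e_1,e_2)=(2,4)$. So there are exactly two short orbits: one of size $2(g-1)$ with stabilizer $C_2$, and one of size $g-1$ with stabilizer $\cong C_2^2$. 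I still need to exclude $e_i\ge8$ combinations carefully, which I expect to be routine.

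\emph{Step 2: fixed points and genus of the quotients.} Let $v\in Z(S)$ be an involution. Since $v$ is central, it lies in one stabilizer of an orbit exactly when it lies in all of them. Hence $m(v)$, the number of fixed points of $v$, is a sum of orbit sizes drawn from $\{2(g-1),\,g-1\}$. By Deuring--Shafarevich, and by Riemann--Hurwitz with all differents equal to $2$,
\[
g-1=2\bigl(g(X/\langle v\rangle)-1\bigr)+m(v).
\]
Nonnegativity of the genus gives $m(v)\in\{0,g-1\}$. Thus $g(X/\langle v\rangle)\in\{1,(g+1)/2\}$, and the genus is $1$ exactly when $v$ lies in the $C_2^2$ stabilizer.

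Next suppose $m(v)=0$. Then $|S/\langle v\rangle|=2(g-1)>2\bigl((g+1)/2-1\bigr)$, and $|S/\langle v\rangle|\ge8$ for $g\ge5$. A fixed point of $S/\langle v\rangle$ on $X/\langle v\rangle$ would come from a short $S$-orbit of size at most $2$, which does not exist. So $v$ is inductive. By \cref{thm:GKclassification}(ib), exactly two of $a,u,au$ are inductive. Therefore exactly one of them, call it $v_0$, satisfies $m(v_0)=g-1$ and $g(X/\langle v_0\rangle)=1$.

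\emph{Step 3: identify $v_0=u$ (main obstacle).} Put $E=X/\langle v_0\rangle$, an ordinary elliptic curve. The group $S/\langle v_0\rangle$ acts on $E$. Since $j(E)\neq0$, the stabilizer of a point in $\Aut(E)$ is $\{\pm1\}$. The translations in $S/\langle v_0\rangle$ form a $2$-subgroup of $E$, and this subgroup is cyclic because $E[2]\cong C_2$. Hence $S/\langle v_0\rangle$ is cyclic, or cyclic of index $2$ extended by an inversion. In particular, it is generalized dihedral with a cyclic index-$2$ subgroup.

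For $v_0=a$ this is impossible once $g-1\ge8$. Indeed, $S/\langle a\rangle$ contains the index-$2$ abelian subgroup $C_0/\langle a\rangle\times\langle u\rangle$, which is non-cyclic. But a dihedral group of order at least $16$ has only one abelian subgroup of index $2$, and it is cyclic.

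Excluding $v_0=au$ is the delicate point. My first attempt will be to write $wcw=c^{-1}u^{\varepsilon}$ for a generator $c$ of $C_0$, which is what "$S/\langle u\rangle$ dihedral" gives. I will then check whether $S/\langle au\rangle$, with abelian index-$2$ subgroup $(C_0\times\langle u\rangle)/\langle au\rangle\cong C_0$, can admit the inversion action of $w$ compatibly with $\varepsilon$. If the group theory alone does not separate $u$ from $au$, the fallback is geometric. I would show that the $C_2^2$ stabilizer must contain a non-central involution, namely a conjugate of $w$ or of $wc^j$. Then I would use the explicit form of the normalizer to force the central involution in it to be $u$. Finally, $g=5$ (with $g-1=4$) will need a short separate check.
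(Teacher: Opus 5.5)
Your Steps 1 and 2 are correct. They take a different route from the paper, which gives no argument here and only cites the proof of \cite[Theorem~4.3]{GK2015}, where $u$ is a specific element. From the data in (ib) alone you show three things. Exactly one central involution $v_0$ has fixed points, namely the $C_2\times C_2$-orbit of length $q$. The quotient $X/\langle v_0\rangle$ is an ordinary elliptic curve. The other two central involutions act freely and are inductive. The arithmetic you postpone in Step 1 is immediate: each term $1-1/e_i$ is at least $1/2$, and $1/2+3/4$ is the only way to reach $5/4$. Also, $g=3$ is vacuous, since $|S/\langle v\rangle|=4$ then. Your exclusion of $v_0=a$ is also correct for $q\ge 8$, via the non-cyclic abelian index-$2$ subgroup $C_0/\langle a\rangle\times\langle u\rangle$.

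The gap is Step 3 for $au$, and more work cannot close it, because $v_0=u$ does not follow from the data listed in (ib). Suppose $wcw=c^{-1}$; this is the split case, and by Corollary~\ref{cor:ib-group} it is the only one that occurs. Then the map fixing $c$ and $w$ and sending $u\mapsto au$ is an automorphism of $S\cong\Dih(C_q)\times C_2$. Moreover $(C_0,au,w)$ satisfies every condition in (ib): $au$ is a central involution, $C_0\times\langle au\rangle=C_0\times\langle u\rangle$, and $S/\langle au\rangle\cong\Dih(C_q)$. The paper's own curves realize this: by Corollary~\ref{cor:center} and Theorem~\ref{thm:centralquotients}, $au$ acts freely and $X/\langle au\rangle=X^+$ has genus $q/2+1$. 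So for the labelling $(C_0,au,w)$ the conclusion you want is false. Neither your $\varepsilon$-test nor the stabilizer/normalizer fallback can succeed, since both use only data invariant under this relabelling. The $\varepsilon$-test does settle the twisted case $\varepsilon=1$, $q\ge8$, where $S/\langle au\rangle$ is semidihedral, but not the split one.

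What is provable is the following. First, $v_0\in\{u,au\}$, so $C_0\times\langle v_0\rangle=C_0\times\langle u\rangle$. Second, $S/\langle v_0\rangle$ is non-abelian, because $[w,c]=c^{-2}u^{\varepsilon}$ has order $q/2\ge4$. Since it acts faithfully on $E$, it is therefore dihedral. Hence $(C_0,v_0,w)$ is again an admissible datum, and one may \emph{choose} $u:=v_0$. That normalization is exactly what the paper imports from GK's proof (Remark~\ref{rem:GK-reading}), and you should present it as a choice of labelling, not as a theorem.

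For $q=4$ your argument against $v_0=a$ also breaks down, since $D_8$ has Klein four subgroups of index $2$, so that case needs separate treatment. In the split case it is easy: $S/\langle a\rangle\cong C_2^3$, which is neither cyclic nor dihedral.
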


\begin{proof}
This is the refinement established in the proof of
\cite[Theorem~4.3, pp.~277--278]{GK2015}: the involutions $a$ and $au$
are shown there to be inductive, whereas the quotient by $u$ has genus
$1$.
\end{proof}

\begin{remark}\label{rem:GK-reading}
The published statement of \cite[Theorem~1.2(ib)]{GK2015} says that
``the two involutions of $E\times\langle u\rangle$'' are the unique two
central inductive involutions.  We denote the cyclic factor in that
statement by $C_0$, to avoid confusion with the elliptic curves used below.
Taken literally, the phrase is ambiguous, since
$C_0\times\langle u\rangle$ has three involutions $a,u,au$, where $a$ is
the unique involution of $C_0$.  The proof of
\cite[Theorem~4.3, pp.~277--278]{GK2015} removes the ambiguity: it shows
that $a$ and $au$ are inductive, whereas $u$ is the third central
involution and $X/\langle u\rangle$ has genus $1$.  Accordingly,
Theorem~\ref{thm:GKclassification} records the invariant content of the
published statement, and Proposition~\ref{prop:GK-refinement} identifies
the three involutions explicitly.
\end{remark}

\begin{proposition}\label{prop:no-genus3}
Case \textup{(ib)} of Theorem~\ref{thm:GKclassification} does not occur
for $g=3$.  Consequently $g-1=2^h\ge4$ in every instance of case
\textup{(ib)}.
\end{proposition}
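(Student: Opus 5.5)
The plan is to show that for $g=3$ the definition of ``inductive'' cannot be satisfied, because the quotient group is too small to meet hypothesis \textup{(I)}. In case \textup{(ib)} we have $|S|=4(g-1)$. Since $S$ is a $2$-group, $g-1$ is a power of $2$, say $g-1=2^h$. Hypothesis \textup{(I)} requires $|S|\ge 8$, so $g-1\ge 2$, that is, $h\ge1$. It therefore suffices to rule out $h=1$, i.e.\ $g=3$.

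Suppose $g=3$. Then $|S|=8$ and $C_0$ is cyclic of order $g-1=2$, so $C_0=\langle a\rangle$, where $a$ is the unique involution of $C_0$. By Theorem~\ref{thm:GKclassification}\textup{(ib)}, $S$ has two central inductive involutions, both lying in $C_0\times\langle u\rangle$. By Proposition~\ref{prop:GK-refinement}, these are $a$ and $au$. Take either one, say $v$. By the definition of an inductive involution, the induced group $S/\langle v\rangle$ must again satisfy \textup{(I)} on $X/\langle v\rangle$, and in particular $|S/\langle v\rangle|\ge 8$. But $|S/\langle v\rangle|=|S|/2=4<8$, a contradiction.

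Hence no central involution of $S$ is inductive when $g=3$. This contradicts the requirement in \textup{(ib)} that $S$ has exactly two such involutions, so case \textup{(ib)} does not occur for $g=3$. Consequently $h\ge2$, i.e.\ $g-1=2^h\ge4$. The argument does not need the refinement beyond the fact that some central involution is inductive. There is no real obstacle here; the only point to check is that the quotient group is required to satisfy the bound $|S|\ge8$ in \textup{(I)}, and not only the bound $|S|>2(g-1)$.
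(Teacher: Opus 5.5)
Your proof is correct and is essentially the paper's argument: an inductive central involution $v$ forces $|S/\langle v\rangle|=2(g-1)\ge 8$, which fails for $g=3$, and $g-1=2^h$ follows from $|S|=4(g-1)$ with $S$ a $2$-group. The paper obtains $g\ge5$ directly and takes $v=a$ via Proposition~\ref{prop:GK-refinement}. You note, rightly, that the existence of an inductive central involution, already asserted in case \textup{(ib)}, is enough.
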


\begin{proof}
In case \textup{(ib)}, let $a$ be the unique involution of the cyclic
subgroup $C_0$.  By Proposition~\ref{prop:GK-refinement}, the involution $a$ is
inductive.  Hence the induced group $S/\langle a\rangle$ satisfies
\textup{(I)}.  Since $|S|=4(g-1)$ in case \textup{(i)},
\[
        |S/\langle a\rangle|=2(g-1)\ge8,
\]
so $g\ge5$.  Thus case \textup{(ib)} cannot occur for $g=3$.

Finally, $S$ is a $2$-group and $|S|=4(g-1)$, so $g-1$ is a power of
$2$.  Therefore $g-1=2^h$ with $h\ge2$.
\end{proof}

\begin{remark}
For $g=3$ the two orders $4(g-1)$ and $2g+2$ are both equal to $8$, so
cases \textup{(i)} and \textup{(ii)} are not separated by the group
order alone.  The point is that case \textup{(ib)} contains an inductive
central involution, which forces the quotient group to satisfy
\textup{(I)} and hence rules out $g=3$.  Thus every curve in case
\textup{(ib)} has $g-1=2^h\ge4$, and Theorem~\ref{thm:converse} below
will cover all such curves.
\end{remark}

We shall be concerned with case \textup{(ib)}.  Giulietti and
Korchm\'aros gave isolated examples related to this case; see
\cite[Sections~6.2 and~6.5]{GK2015}.  In
Proposition~\ref{prop:sporadic-GK} below we identify these examples
directly with members of our family.  In his CoCoA 2015 lectures,
Korchm\'aros posed the following problem
\cite[Problem~11]{KorchmarosSlides}.

\begin{problem}[Korchm\'aros]\label{prob:Korchmaros11}
Construct an infinite family of curves of type \textup{(ib)}.
\end{problem}

A closely related Artin--Schreier construction over ordinary elliptic
curves already appears in \cite[Section~5]{GK2015}.  Giulietti and
Korchm\'aros start with an ordinary elliptic curve $\bar{\mathcal X}$ and,
for $n=2^h\ge8$, distinguish the subgroup
$\bar{\mathcal X}[n](k)$ from the nontrivial coset
$\bar{\mathcal X}[2n](k)\setminus\bar{\mathcal X}[n](k)$.  Their covers
are ramified over the latter set; the translation subgroup of order $n$
lifts non-splitly to a cyclic group of order $2n$, and the resulting
extremal group is dihedral; see in particular
\cite[Proposition~5.12 and Theorem~5.14]{GK2015}.  The construction below
uses instead the subgroup $E[q](k)$ itself and prescribes equal residues at
all its points.  This forces the lift of the translation group to split
from the deck involution and produces case \textup{(ib)} rather than the
dihedral case \textup{(ia)}.  The new ingredients here are this
split equal-residue realization, the descent argument giving the converse,
and the resulting parametrization of all curves of type \textup{(ib)}.

Artin--Schreier covers and their $p$-ranks have been studied since the
work of Subrao \cite{Subrao}.  Large automorphism groups of ordinary
curves in characteristic $2$ were studied, for example, by
Montanucci--Speziali \cite{MontanucciSpeziali}; their characteristic-$2$
bound is proved for ordinary curves of even genus, whereas the curves
constructed here have odd genus $2^h+1$.  Our concern is the extremal
configuration singled out in \cite{GK2015} and in
Problem~\ref{prob:Korchmaros11}.  The dihedral construction of
\cite[Section~5]{GK2015} is closely related to ours, but it has a non-split
lift and belongs to case \textup{(ia)}.  To the best of our knowledge, the
split equal-residue family below, the converse description of all curves of
type \textup{(ib)}, and its parameter space do not appear in those works.

We solve this problem by means of Artin--Schreier covers of ordinary
elliptic curves.  Let $q=2^h$ and let $E/k$ be ordinary.  The group
\[
        T=E[q](k)
\]
is cyclic of order $q$.  We choose a rational function $f$ with simple
poles at the points of $T$ and with equal nonzero residues with respect
to an invariant differential on $E$.  The translations by $T$ and the
elliptic involution lift to the Artin--Schreier cover
\[
        z^2+z=f.
\]
This gives the following result.

\begin{theorem}\label{thm:main}
Let $q=2^h$ with $h\ge3$, let $E/k$ be an ordinary elliptic curve, let
$\omega$ be a nonzero invariant differential on $E$, and let
$\lambda\in k^\times$.  Put $T=E[q](k)$.  Choose
$f\in H^0(E,\mathcal O_E(\sum_{P\in T}[P]))$ satisfying
\[
        \Res_P(f\omega)=\lambda\qquad(P\in T),
\]
and let $X$ be the smooth projective curve with function field
$k(E)(z)$, where $z^2+z=f$.  The $k$-isomorphism class of $X$ is
independent of the choice of $f$; we denote it by
$X(E,\omega,\lambda,q)$.  This curve has the following properties:
\begin{enumerate}[label=\textup{(\roman*)}]
\item $X$ is ordinary and bielliptic, with $g(X)=q+1$;
\item the deck involution of $X\to E$ is the unique bielliptic
involution of $X$;
\item
\[
        \Aut(X)\cong \Dih(C_q)\times C_2,
        \qquad
        |\Aut(X)|=4q=4(g(X)-1);
\]
\item $\Aut(X)$ has no global fixed point on $X$, and
$(X,\Aut(X))$ is of type \textup{(ib)};
\item there is a cyclic subgroup $C\cong C_q$ acting freely on $X$, and
$X/C$ is an ordinary curve of genus $2$.
\end{enumerate}
\end{theorem}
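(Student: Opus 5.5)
First I will check that the construction is well defined and compute the basic invariants. By Riemann--Roch, $H^0(E,\mathcal O_E(D))$ with $D=\sum_{P\in T}[P]$ has dimension $q$. The residue theorem says $\sum_P\Res_P(f\omega)=q\lambda=0$ in characteristic $2$, and the residue map to $k^T$ is surjective onto the hyperplane of vectors with zero sum. Hence the admissible $f$ form an affine space $f_0+k$, and every such $f$ has a simple pole at each point of $T$. Two choices $f,f+c$ give isomorphic covers via $z\mapsto z+\alpha$ with $\alpha^2+\alpha=c$, so the isomorphism class is independent of $f$.

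Since all poles are simple (odd order), Riemann--Hurwitz for Artin--Schreier covers gives $2g(X)-2=2\cdot0+\sum_{P\in T}(1+1)=2q$, so $g(X)=q+1$. Deuring--Shafarevich gives $\gamma_X-1=2(\gamma_E-1)+q=q$, so $\gamma_X=q+1$ and $X$ is ordinary. Bielliptic is immediate from the deck involution $u\colon z\mapsto z+1$.

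Next I lift automorphisms. For $t\in T$, the translation $\tau_t$ preserves $D$ and $\omega$, so $f\circ\tau_t$ is again admissible. Thus $f\circ\tau_t=f+c_t$, and $\tau_t$ lifts as $z\mapsto z+\alpha_t$ with $\alpha_t^2+\alpha_t=c_t$. The elliptic involution $\iota$ (taking $O\in T$ as origin) also preserves $D$ and satisfies $\iota^*\omega=-\omega=\omega$, so it lifts too. This gives a group $S$ of order $4q$ that is an extension of $T\rtimes\langle\iota\rangle\cong\Dih(C_q)$ by $\langle u\rangle$.

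The key point is that the extension splits as $\Dih(C_q)\times C_2$. Since $u$ is central (it is the deck transformation), I must show:
\begin{enumerate}[label=\textup{(\alph*)}]
\item the lift of a generator $t$ of $T$ can be chosen of order $q$;
\item the lift of $\iota$ can be chosen to be an involution inverting it.
\end{enumerate}
The map $t\mapsto c_t$ is a homomorphism $T\to k$, hence trivial on the odd part. Since $T$ is a $2$-group I cannot argue by order alone. Instead I compare with the non-split GK construction, whose residues alternate on the two cosets. Concretely, I would pull back along the isogeny $E\to E/T$: equal residues means $f\omega$ is the pullback of a differential on $E/T$ with one simple pole at the origin. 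So $f=\phi^*g+(\text{const})$ with $g$ on $E'=E/T$ having one simple pole. This is impossible unless the Artin--Schreier cover of $E$ itself arises by pullback up to $\wp$-equivalence. Thus $X=Y\times_{E'}E$, where $Y\to E'$ is the double cover $z^2+z=g$ ramified at one point (genus $2$, since $1+\tfrac12\cdot2=2$). Because $E\to E'$ is étale, $C\cong T$ acts freely on $X$ with $X/C=Y$, which proves (v) and makes the lift of $T$ a copy of $C_q$ commuting with $u$. Similarly $\iota$ descends to $E'$, where $g$ can be chosen odd (the $-1$ part), which gives the involutive lift. I expect this descent/pullback step to be the main obstacle: one must make the invariance of $f\omega$ rigorous, for instance by checking that $f-\phi^*g$ is regular and hence constant. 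Ordinarity of $Y$ then follows from Deuring--Shafarevich.

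Finally, to get $\Aut(X)=S$ and the uniqueness of the bielliptic involution, I would first show $u$ is the unique bielliptic involution. Any other involution $v$ in $S$ has quotient of genus $\ne1$; for elements outside $S$, I would invoke Theorem~\ref{thm:GKclassification}. Indeed, $|S|=4(g-1)>2(g-1)$ and $S$ has no global fixed point: $T$ moves the ramification points, and the orbits are computed explicitly. So a Sylow $2$-subgroup containing $S$ satisfies (I) and (II), and its order is at most $4(g-1)$ by case (i) or (ii), so $S$ is a Sylow $2$-subgroup. Uniqueness of the bielliptic involution then makes $u$ central in $\Aut(X)$, so $\Aut(X)$ normalizes $\langle u\rangle$ and descends to automorphisms of $E$ preserving the branch set $T$ and the residue data. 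This group is exactly $T\rtimes\langle\iota\rangle$, with extra $j$-automorphisms excluded since ordinary $E$ in characteristic $2$ has $\Aut(E,O)=\{\pm1\}$. The uniqueness of the bielliptic involution I would prove via Castelnuovo--Severi: two bielliptic involutions force $g\le 1+1+2\cdot1\cdot1=3<q+1$. Type (ib) then follows by matching $S=(C\times\langle u\rangle)\rtimes\langle\tilde\iota\rangle$ with the statement and applying Proposition~\ref{prop:GK-refinement}.
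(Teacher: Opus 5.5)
There is a genuine gap at the step you identify as the key point: the splitting of $S$ as $\Dih(C_q)\times C_2$, on which (iii) and (v) depend. The descent argument you offer for it rests on three false claims.
\begin{enumerate}
\item The differential $f\omega$ is \emph{not} a pullback from $E'=E/T$. Such a differential on $E'$ would have a single simple pole at $O'$ with residue $\lambda\neq0$, because \'etale pullback preserves residues. This contradicts the residue theorem on $E'$. Since $\tau_t^*(f\omega)=f\omega+c_t\omega$, the same argument even shows $c_{P_0}\neq0$.
\item No $g\in k(E')$ has a single simple pole, because $\ell([O'])=1$ on a genus-one curve. So ``$f=\phi^*g+\text{const}$'' cannot hold.
\item ``Choosing $g$ odd, the $-1$ part'' has no meaning in characteristic $2$, where $-1=1$ and there is no $\pm$-eigenspace decomposition.
\end{enumerate}
As written, therefore, neither the order-$q$ lift (a), nor the involutive inverting lift (b), nor the freeness and quotient statement (v) is established.

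The missing observation is that no descent is needed; it follows from what you already proved. Since $\tau_{P_0}^*f=f+c$ and $\iota^*f=f+d$ with $c,d\in k$ constants, the lifts $\widetilde\tau(R,z)=(R+P_0,z+\alpha)$ and $w(R,z)=(-R,z+b)$ shift $z$ by \emph{constants}. Then:
\begin{itemize}
\item $\widetilde\tau^q(R,z)=(R,z+q\alpha)=(R,z)$;
\item $w^2=1$ because $2b=0$;
\item $w\widetilde\tau w(R,z)=(R-P_0,z+2b+\alpha)=(R-P_0,z+\alpha)=\widetilde\tau^{-1}(R,z)$.
\end{itemize}
Hence $D=\langle\widetilde\tau,w\rangle$ maps isomorphically onto $T\rtimes\langle\iota\rangle\cong\Dih(C_q)$. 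It follows that $D\cap\langle u\rangle=1$ and $S=D\times\langle u\rangle$. Part (v) is then immediate. Every nontrivial power of $\widetilde\tau$ projects to a nontrivial translation, so $C=\langle\widetilde\tau\rangle$ acts freely. Riemann--Hurwitz and Deuring--Shafarevich for this \'etale cover give $g(X/C)=\gamma(X/C)=2$. This is exactly the paper's route.

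Three smaller corrections.
\begin{itemize}
\item The Castelnuovo--Severi bound for two degree-$2$ maps to genus-one curves is $g\le 2\cdot1+2\cdot1+(2-1)(2-1)=5$, not $3$. This is harmless since $g\ge9$.
\item Once Castelnuovo--Severi gives uniqueness of $u$, descent to $\Aut(E,T)$ bounds $|\Aut(X)|\le4q$ directly. The Sylow/classification detour is then superfluous, and it also overlooks that case \textup{(iii)} carries no order bound; Nakajima's bound would be the right tool there.
\item For type \textup{(ib)} you cannot simply match the group structure or invoke Proposition~\ref{prop:GK-refinement}, which presupposes \textup{(ib)}. You must exclude the other cases: \textup{(ii)} because $4q\neq2q+4$; \textup{(iii)} because $u$ is central with elliptic quotient, hence non-inductive; and \textup{(ia)} because $|Z(S)|=4$, whereas a dihedral group of order $4q$ has center of order $2$.
\end{itemize}
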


The Artin--Schreier construction used in the proof is also defined for
$q=4$; only the assertion on the full automorphism group in
Theorem~\ref{thm:main} requires $h\ge3$.  We use the same notation
$X(E,\omega,\lambda,4)$ below.  In particular, curves of type
\textup{(ib)} exist for every
\[
        g=2^h+1,\qquad h\ge2,
\]
where the case $h=2$ is established in Remark~\ref{rem:smallq} below.
We prove that every curve in case \textup{(ib)} arises from this
construction.  Proposition~\ref{prop:no-genus3} shows that no further
genus can occur.  For $h\ge3$ the converse, together with
Theorem~\ref{thm:main}, determines the full automorphism group.  For
$h=2$ the construction recovers every type \textup{(ib)} curve of genus
$5$, although we do not determine its full automorphism group.  For every
fixed $q\ge8$, the rigidity result below gives a complete two-parameter
classification of isomorphism classes: they are in bijection with
$(k^\times)^2$; see Theorem~\ref{thm:parameter-space}.

The paper is organized as follows.  Section~2 collects the facts on
Artin--Schreier covers and ordinary elliptic curves used in the sequel.
The construction and the full automorphism group are treated in
Section~3.  Section~4 gives the short orbits and ramification.
Section~5 proves the converse and studies the cyclic quotients.
Section~6 deals with the central involutions.  Explicit equations are
given in Section~7.

\section{Preliminaries}\label{sec:prelim}

We use the following standard facts.  If a $2$-group $H$ acts on a curve
$Y$ and $\Omega_1,\ldots,\Omega_r$ are its short orbits, the
Deuring--Shafarevich formula (see, for example, \cite{Subrao,NakajimaDS}) is
\begin{equation}\label{eq:DS}
 \gamma(Y)-1
 =|H|\bigl(\gamma(Y/H)-1\bigr)
  +\sum_{i=1}^r\bigl(|H|-|\Omega_i|\bigr).
\end{equation}
Nakajima's bound gives
\begin{equation}\label{eq:Nakajima}
        |H|\le4(\gamma(Y)-1)
\end{equation}
when $\gamma(Y)>1$ \cite[Theorem~1]{Nakajima}.  An ordinary curve is one for which
$\gamma(Y)=g(Y)$.

\begin{lemma}\label{lem:ordinary-weak}
Let a finite $2$-group $H$ act on an ordinary curve $Y$.  Then $Y/H$ is
ordinary and the action is weakly ramified.  In particular, every point
stabilizer $H_P$ is an elementary abelian $2$-group.  Compare
\cite{NakajimaDS}.
\end{lemma}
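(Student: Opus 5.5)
The plan is to prove the three assertions in the order: the quotient is ordinary, the action is weakly ramified, and point stabilizers are elementary abelian. Each step uses the Deuring--Shafarevich formula \eqref{eq:DS} together with the Riemann--Hurwitz formula, reducing to a chain of cyclic subgroups of order $2$. Since $H$ is a $2$-group, it has a normal series with all quotients of order $2$. Ordinarity of the quotient then follows by induction once we treat $|H|=2$. For the inductive step we need that $Y/N$ is ordinary for a normal subgroup $N$ of order $2$, so that $H/N$ again acts on an ordinary curve.

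For $H=\langle\sigma\rangle$ of order $2$ with $r$ fixed points, \eqref{eq:DS} gives $\gamma(Y)-1=2(\gamma(Y/H)-1)+r$. Riemann--Hurwitz gives
\[
2g(Y)-2=2\bigl(2g(Y/H)-2\bigr)+\sum_{P}d_P,
\]
where $d_P=\sum_{i\ge0}(|H_{P,i}|-1)\ge 2$ at each fixed point $P$. Equality $d_P=2$ holds exactly when $H_{P,2}=1$, that is, when ramification is weak at $P$. Using $\gamma(Y)=g(Y)$ and subtracting the two formulas,
\[
0=2\bigl(g(Y/H)-\gamma(Y/H)\bigr)+\tfrac12\sum_P(d_P-2).
\]
Both terms are nonnegative, because $\gamma\le g$ and $d_P\ge2$. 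Hence both vanish, so $Y/H$ is ordinary and $d_P=2$ at every fixed point.

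For general $H$, I would compute the same difference directly rather than through the tower, which also yields weak ramification for $H$ itself. Riemann--Hurwitz reads $2g(Y)-2=|H|(2g(Y/H)-2)+\sum_{\text{orbits}}|\Omega|\,d_{P}$, with $d_P=\sum_i(|H_{P,i}|-1)$. Since $H_{P,0}=H_{P,1}=H_P$ (wild, $2$-group), we have $d_P\ge 2(|H_P|-1)$, with equality iff $H_{P,2}=1$. On the other side, \eqref{eq:DS} contributes $|\Omega|(|H_P|-1)$ per short orbit. Substituting $\gamma(Y)=g(Y)$ gives
\[
0=|H|\bigl(g(Y/H)-\gamma(Y/H)\bigr)+\tfrac12\sum_{\Omega}|\Omega|\bigl(d_P-2(|H_P|-1)\bigr).
\]
Both summands are nonnegative, so $Y/H$ is ordinary and $H_{P,2}=1$ for all $P$, which is weak ramification. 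This single computation makes the induction unnecessary.

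Finally, $H_{P,1}=H_P$ and $H_{P,2}=1$, so $H_P\cong H_{P,1}/H_{P,2}$. The standard embedding of $H_{P,1}/H_{P,2}$ into the additive group of $k$ then shows that $H_P$ is elementary abelian.

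The main obstacle is bookkeeping. We must make sure that \eqref{eq:DS}, as quoted, uses the stabilizer order $|H_P|=|H|/|\Omega|$ consistently, and that the lower-bound inequality $d_P\ge2(|H_P|-1)$ is justified. The latter uses only that the ramification is wild and that $H_{P,1}=H_{P,0}$ for a $p$-group.
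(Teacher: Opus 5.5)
Your proposal is correct and is essentially the paper's argument. You compare the Deuring--Shafarevich formula (with $\gamma(Y)=g(Y)$) against Riemann--Hurwitz, use $d_P\ge 2(|H_P|-1)$ and $\gamma(Y/H)\le g(Y/H)$ to force equality everywhere, and then get elementary abelian stabilizers from $H_P\cong H_P^{(1)}/H_P^{(2)}$. The preliminary order-$2$ case and the induction along a normal series are superfluous, as you yourself note.
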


\begin{proof}
Let $Z=Y/H$.  Since $H$ is a $2$-group, at every ramified point
$H_P^{(0)}=H_P^{(1)}=H_P$, and hence
\[
 d_P\ge 2(|H_P|-1).
\]
Moreover,
\[
 \sum_{P\in Y}(|H_P|-1)
 =\sum_{\Omega\text{ short}}(|H|-|\Omega|).
\]
Riemann--Hurwitz therefore gives
\[
 2g(Y)-2\ge |H|(2g(Z)-2)
 +2\sum_{\Omega\text{ short}}(|H|-|\Omega|).
\]
On the other hand, doubling the Deuring--Shafarevich formula and using
$\gamma(Y)=g(Y)$ gives
\[
 2g(Y)-2=2|H|(\gamma(Z)-1)
 +2\sum_{\Omega\text{ short}}(|H|-|\Omega|).
\]
Since $\gamma(Z)\le g(Z)$, equality must hold throughout.  Thus
$\gamma(Z)=g(Z)$ and $d_P=2(|H_P|-1)$ at every ramified point, so
$H_P^{(2)}=1$.  Finally, the standard ramification quotient
$H_P^{(1)}/H_P^{(2)}$ is elementary abelian; hence so is $H_P$.
\end{proof}

If
\[
        k(X)=k(Y)(z),\qquad z^2+z=f,
\]
and $f$ has a simple pole at $P\in Y$, then the point above $P$ is
totally ramified, its lower jump is $1$, and its different exponent is
$2$; see \cite[Proposition~III.7.8]{Stichtenoth}.  We shall also use the
Castelnuovo--Severi inequality in the form given in
\cite[Theorem~III.10.3]{Stichtenoth}.

Let $E/k$ be an ordinary elliptic curve.  For every $h\ge1$,
\[
        E[2^h](k)\cong C_{2^h}.
\]
A nonzero invariant differential on $E$ has no zero.  Since $E$ is
ordinary in characteristic $2$, one has $j(E)\ne0$; hence
\[
        \Aut(E,0)=\{1,[-1]\}
\]
by \cite[Theorem~III.10.1]{Silverman}.  We write
\[
        \Dih(C_n)=\langle t,w\mid t^n=w^2=1,\;wtw=t^{-1}\rangle
\]
for the dihedral group of order $2n$.

The following lemma will be used repeatedly.

\begin{lemma}\label{lem:principalparts}
Let $T\subset E(k)$ be a finite subgroup of even order $q$, put
\[
        D=\sum_{P\in T}[P],
\]
and let $\omega$ be a nonzero invariant differential on $E$.  For every
$\lambda\in k$ there exists $f\in H^0(E,\mathcal O_E(D))$ such that
\[
        \Res_P(f\omega)=\lambda\qquad(P\in T).
\]
If $\lambda\ne0$, then the poles of $f$ are precisely the points of $T$
and they are simple.  Moreover, for every $Q\in T$,
\[
        \tau_Q^*f-f\in k,\qquad [-1]^*f-f\in k.
\]
\end{lemma}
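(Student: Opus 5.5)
Existence comes from Riemann--Roch plus the residue theorem; the invariance statements come from the invariance of $\omega$ under translations and its behaviour under $[-1]$.

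\textbf{Existence.} Since $\deg D=q\ge2$ on the genus-one curve $E$, Riemann--Roch gives $\dim H^0(E,\mathcal O_E(D))=q$. Consider the linear map
\[
  \rho\colon H^0(E,\mathcal O_E(D))\to k^T,\qquad
  f\mapsto \bigl(\Res_P(f\omega)\bigr)_{P\in T}.
\]
Its kernel consists of the functions with no poles, because $\omega$ has no zeros or poles. So the kernel is the constants, and the image has dimension $q-1$. By the residue theorem, the image lies in the hyperplane $\{\sum_P r_P=0\}$, so it equals that hyperplane. The constant vector $(\lambda,\dots,\lambda)$ has coordinate sum $q\lambda=0$, since $q$ is even and $\operatorname{char}k=2$. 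Hence it lies in the image, which gives $f$.

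\textbf{Poles when $\lambda\neq0$.} The function $f$ has at most simple poles at the points of $T$. Since $\omega$ is regular and nowhere vanishing, a nonzero residue of $f\omega$ at $P$ forces an actual simple pole of $f$ at $P$. So for $\lambda\neq0$ the poles of $f$ are exactly the points of $T$, all simple.

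\textbf{Invariance.} Translations preserve invariant differentials, so $\tau_Q^*\omega=\omega$. The translation $\tau_Q$ maps $T$ to itself because $T$ is a subgroup. Therefore $g=\tau_Q^*f$ lies in $H^0(E,\mathcal O_E(D))$, and
\[
  \Res_P(g\omega)=\Res_P\bigl(\tau_Q^*(f\omega)\bigr)=\Res_{P+Q}(f\omega)=\lambda .
\]
Thus $g-f$ lies in $\ker\rho=k$.

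For $[-1]$, the subgroup $T$ is also stable, and $[-1]^*\omega=-\omega=\omega$ in characteristic $2$. (Indeed $[-1]^*\omega=\pm\omega$ for any invariant differential, since $[m]^*\omega=m\omega$.) The same residue computation gives $\Res_P([-1]^*f\,\omega)=\Res_{-P}(f\omega)=\lambda$, so again $[-1]^*f-f\in k$.

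The only step needing care is the sign and pullback of $\omega$ under $[-1]$. Characteristic $2$ makes it harmless, so I expect no real obstacle.
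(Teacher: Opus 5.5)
Your proof is correct and follows essentially the same route as the paper: the residue map $\rho$ on $H^0(E,\mathcal O_E(D))$, with Riemann--Roch and the residue theorem identifying its image as the sum-zero hyperplane, which contains $(\lambda,\dots,\lambda)$ because $q\lambda=0$. The invariance statements then follow from $\ker\rho=k$ together with $\tau_Q^*\omega=\omega$ and $[-1]^*\omega=-\omega=\omega$; you also make explicit two points the paper leaves implicit, namely the exact simple poles when $\lambda\neq0$ and that $\tau_Q^*f$ stays in $H^0(E,\mathcal O_E(D))$.
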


\begin{proof}
Consider
\[
 \rho:H^0(E,\mathcal O_E(D))\longrightarrow k^T,\qquad
 f\longmapsto\bigl(\Res_P(f\omega)\bigr)_{P\in T}.
\]
Since $g(E)=1$ and $\deg D=q$, Riemann--Roch gives
$h^0(E,\mathcal O_E(D))=q$.  The kernel of $\rho$ consists of the
constants.  Indeed, every allowed pole is at most simple; hence, if all
residues vanish, every possible simple pole of $f\omega$ is removable.
Thus $f\omega$ is regular on $E$, and since $\omega$ is nowhere
vanishing, $f$ is constant.  Hence
$\dim\operatorname{im}\rho=q-1$.  By the residue theorem,
\[
 \operatorname{im}\rho
 \subseteq\left\{(a_P)_{P\in T}:\sum_{P\in T}a_P=0\right\},
\]
and the two spaces have the same dimension.  Since $q=0$ in $k$, the
constant vector $(\lambda)_{P\in T}$ belongs to this hyperplane.  Hence it
lies in the image of $\rho$, which proves the existence of $f$.

If $Q\in T$, translation by $Q$ preserves $\omega$ and permutes $T$.
For every $P\in T$ one has
\[
 \Res_P\bigl((\tau_Q^*f)\omega\bigr)
 =\Res_{P+Q}(f\omega)=\lambda=\Res_P(f\omega).
\]
Thus the possible simple poles of $(\tau_Q^*f-f)\omega$ have zero
residue and are removable.  Since $\omega$ is nowhere vanishing,
$\tau_Q^*f-f$ is regular, and hence constant.  The same argument applies
to $[-1]$: here $[-1](T)=T$ and $[-1]^*\omega=-\omega=\omega$ in
characteristic $2$, so the residues cancel in exactly the same way.
\end{proof}

\section{The construction}\label{sec:construction}

\subsection{The Artin--Schreier cover}

\begin{proof}[Proof of Theorem~\ref{thm:main}, except for the full automorphism group]
Because $E$ is ordinary,
\[
 E[2^h](k)\cong C_q.
\]
Let
\[
 T=E[2^h](k)=\langle P_0\rangle.
\]
Apply Lemma~\ref{lem:principalparts} with the prescribed
$\lambda\in k^\times$ to obtain $f\in k(E)$ having exactly the points of
$T$ as simple poles and satisfying
\[
 \Res_P(f\omega)=\lambda\qquad(P\in T).
\]

Let $X$ be the smooth projective curve with function field
\begin{equation}\label{eq:AS}
 k(X)=k(E)(z),\qquad z^2+z=f.
\end{equation}
The resulting $k$-isomorphism class is independent of the choice of $f$
with the prescribed residues.  Indeed, if $f'$ is another such function,
then $f'-f$ lies in the kernel of the residue map in
Lemma~\ref{lem:principalparts}, hence is a constant $c\in k$.  Since $k$ is
algebraically closed, there is $e\in k$ with $e^2+e=c$, and the change of
Artin--Schreier coordinate $z\mapsto z+e$ identifies the covers defined by
$f$ and $f'$.  Thus the notation $X(E,\omega,\lambda,q)$ is unambiguous
up to $k$-isomorphism.

The Artin--Schreier extension is nontrivial.  Indeed, a function of the
form $r^2+r+c$ cannot have a pole of odd order, while $f$ has simple
poles.  Thus $[k(X):k(E)]=2$.

At each $P\in T$ the function $f$ has a pole of order one.  Standard
Artin--Schreier ramification theory shows that the unique point of $X$
above $P$ is totally ramified, with lower jump $1$ and different exponent
$2$; see \cite[Proposition~III.7.8]{Stichtenoth}.  There is no other
ramification.  Since $g(E)=1$, Riemann--Hurwitz gives
\[
 2g(X)-2=\sum_{P\in T}2=2q,
\]
and hence
\begin{equation}\label{eq:genus}
 g(X)=q+1.
\end{equation}

Let $\gamma(Y)$ denote the $2$-rank of a curve $Y$.  The
Deuring--Shafarevich formula for the $C_2$-cover $X\to E$ gives
\[
 \gamma(X)-1
 =2\bigl(\gamma(E)-1\bigr)+q.
\]
Since $E$ is ordinary, $\gamma(E)=1$, and therefore
\[
 \gamma(X)=q+1=g(X).
\]
Thus $X$ is ordinary.

Let $\tau=\tau_{P_0}$.  By Lemma~\ref{lem:principalparts},
\[
 \tau^*f-f=c
\]
for some $c\in k$.  Choose $\alpha\in k$ with $\alpha^2+\alpha=c$.  Then
\[
 \widetilde\tau(R,z)=(R+P_0,z+\alpha)
\]
defines an automorphism of $X$.  Its projection to $E$ has exact order
$q$, and
\[
 \widetilde\tau^q(R,z)=(R,z+q\alpha)=(R,z),
\]
so $\widetilde\tau$ itself has exact order $q$.

The deck transformation is
\[
 u(R,z)=(R,z+1).
\]
Let $\iota=[-1]$ on $E$.  Write $\iota^*f-f=d$ and choose $b\in k$ with
$b^2+b=d$.  Then
\[
 w(R,z)=(-R,z+b)
\]
is well defined, since
\[
 (z+b)^2+(z+b)=f(R)+d=f(-R),
\]
and $w^2=1$ because $2b=0$.  Clearly
$[u,\widetilde\tau]=[u,w]=1$.  Moreover, using $w^{-1}=w$,
\[
\begin{aligned}
 w\widetilde\tau w^{-1}(R,z)
 &=w\widetilde\tau(-R,z+b)\\
 &=w(-R+P_0,z+b+\alpha)\\
 &=(R-P_0,z+\alpha)
 =\widetilde\tau^{-1}(R,z),
\end{aligned}
\]
where the last equality uses $-\alpha=\alpha$ in characteristic $2$.
Put $D=\langle\widetilde\tau,w\rangle$.  The projection $X\to E$
induces a surjection
\[
        D\longrightarrow
        \langle\tau_{P_0},[-1]\rangle\cong\Dih(C_q).
\]
The relations above give $|D|\le2q$, while the image has order $2q$.
Hence $D\cong\Dih(C_q)$.  The projection is faithful on $D$, so
$D\cap\langle u\rangle=1$.  Since $u$ is central,
\[
        S:=\langle\widetilde\tau,w,u\rangle
        \cong\Dih(C_q)\times C_2,
\]
and $|S|=4q$.  The quotient $X/\langle u\rangle$ is $E$, so $X$ is
bielliptic.  The subgroup $S$ has no global fixed point, because every
nontrivial power of $\widetilde\tau$ projects to a nontrivial translation
of $E$ and therefore has no fixed point.

In fact the cyclic subgroup
\[
 C:=\langle\widetilde\tau\rangle\cong C_q
\]
acts freely on $X$.  Put $Y=X/C$.  Since $X\to Y$ is an unramified cyclic
cover of degree $q$, Riemann--Hurwitz gives
\[
 2g(X)-2=q\bigl(2g(Y)-2\bigr).
\]
Using $g(X)=q+1$, we obtain $g(Y)=2$.  Likewise, the
Deuring--Shafarevich formula for the free $C$-action gives
\[
 \gamma(X)-1=q\bigl(\gamma(Y)-1\bigr).
\]
Since $\gamma(X)=q+1$, it follows that $\gamma(Y)=2=g(Y)$.  Hence $Y$ is
ordinary.  This proves part \textup{(v)} of the theorem at this stage.

Since $g(X)=q+1\ge9$, the Castelnuovo--Severi inequality implies that $X$
has at most one bielliptic involution: two distinct degree-$2$ maps to
genus-one curves would give
\[
 g(X)\le 2\cdot1+2\cdot1+(2-1)(2-1)=5;
\]
see \cite[Theorem~III.10.3]{Stichtenoth}.  Hence $u$ is the unique
bielliptic involution.
\end{proof}

\subsection{The full automorphism group}

\begin{proposition}\label{prop:fullaut}
For a curve $X=X(E,\omega,\lambda,q)$ of Theorem~\ref{thm:main},
\[
 \Aut(X)=S\cong \Dih(C_q)\times C_2.
\]
In particular, the group occurring in case \textup{(ib)} is the full
automorphism group of $X$.
\end{proposition}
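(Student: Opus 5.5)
The plan is to use that $u$ is the unique bielliptic involution (Theorem~\ref{thm:main}(ii)). Then $u$ is central in $G:=\Aut(X)$: for every $\sigma\in G$, the conjugate $\sigma u\sigma^{-1}$ is again a bielliptic involution, so it equals $u$. Consequently $G$ descends to $\bar G:=G/\langle u\rangle$ acting faithfully on $E=X/\langle u\rangle$. Since $\langle u\rangle$ is normal, $\bar G$ permutes the branch locus of $X\to E$, so it preserves $T$. Thus $\bar G$ embeds in the stabilizer of $T$ in $\Aut(E)=E(k)\rtimes\Aut(E,0)=E(k)\rtimes\{\pm1\}$.

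Next I determine that stabilizer. An element $R\mapsto \epsilon R+Q$ with $\epsilon=\pm1$ maps $T$ to $T$ exactly when $Q\in T$, because $0\in T$ forces $Q\in T$ and conversely. So the stabilizer is $T\rtimes\{\pm1\}\cong\Dih(C_q)$, of order $2q$. Hence $|G|\le 2\cdot 2q=4q=|S|$, and therefore $G=S$.

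The step needing care is the centrality of $u$, which rests entirely on the uniqueness of the bielliptic involution. That was obtained from Castelnuovo--Severi using $g(X)=q+1\ge9$, and this is exactly where $h\ge3$ is used. The remaining ingredient is the description of $\Aut(E)$, which holds because $j(E)\neq0$, so that $\Aut(E,0)=\{\pm1\}$.

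Alternatively, Nakajima's bound \eqref{eq:Nakajima} gives $|H|\le 4q$ for the Sylow $2$-subgroup $H\supseteq S$, so $S$ is Sylow. Odd-order elements would still have to be excluded, and the descent argument above does this directly.
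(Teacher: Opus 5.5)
Your proposal is correct and follows the paper's proof essentially step for step. Both arguments use centrality of the unique bielliptic involution $u$ (from Castelnuovo--Severi, since $g=q+1\ge9$), embed $\Aut(X)/\langle u\rangle$ into the stabilizer $T\rtimes\langle[-1]\rangle\cong\Dih(C_q)$ of $T$ in $\Aut(E)$ (using $j(E)\ne0$), and then compare orders with $S$; your closing remark on Nakajima's bound is, as you say yourself, not needed.
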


\begin{proof}
The unique bielliptic involution $u$ is central in $\Aut(X)$: for any
$\alpha\in\Aut(X)$, the conjugate $\alpha u\alpha^{-1}$ is again a
bielliptic involution, hence equals $u$.  Therefore every automorphism of
$X$ descends through
\[
 \pi:X\longrightarrow E=X/\langle u\rangle.
\]
The descended automorphism preserves the branch locus of $\pi$, which is
precisely $T$.  The kernel of the descent homomorphism is the deck group
$\langle u\rangle$.  Thus
\begin{equation}\label{eq:descentinj}
 \Aut(X)/\langle u\rangle\hookrightarrow \Aut(E,T),
\end{equation}
where $\Aut(E,T)$ denotes the automorphisms of the curve $E$ preserving
$T$ setwise.

Every automorphism $\alpha$ of the genus-one curve $E$ can be written
uniquely in the form
\[
 \alpha=\tau_Q\circ\varphi,
 \qquad Q=\alpha(0),\quad \varphi\in\Aut(E,0).
\]
If $\alpha$ preserves $T$ setwise, then $Q=\alpha(0)\in T$.  Since $E$
is ordinary in characteristic $2$, one has $j(E)\ne0$ and
\[
 \Aut(E,0)=\{1,[-1]\}
\]
by \cite[Theorem~III.10.1]{Silverman}; both elements preserve
$T$.  Conversely every translation by a point of $T$, and $[-1]$, preserve
$T$.  Hence
\[
 \Aut(E,T)=T\rtimes\langle[-1]\rangle
 \cong\Dih(C_q).
\]
The subgroup $S/\langle u\rangle$ already maps isomorphically onto this
whole group.  The injection \eqref{eq:descentinj} therefore forces
\[
 \Aut(X)/\langle u\rangle=S/\langle u\rangle.
\]
Since $S\subseteq\Aut(X)$ and both contain $u$, it follows that
$\Aut(X)=S$.
\end{proof}

Proposition~\ref{prop:fullaut} completes the proof of Theorem~\ref{thm:main}.  In particular,
\[
 |\Aut(X)|=4q=4(g(X)-1).
\]
Since $q\ge8$, one has $|\Aut(X)|\ge8$ and
$|\Aut(X)|=4q>2q=2(g(X)-1)$, so hypothesis \textup{(I)} holds; hypothesis
\textup{(II)} was proved above.  Hence the classification theorem applies.  Case \textup{(ii)} is
impossible because $|\Aut(X)|=4(g-1)$ whereas case \textup{(ii)} has
order $2g+2$; these are equal only for $g=3$, while here $g\ge9$.
Case \textup{(iii)} is impossible because the central involution $u$
has quotient $E$ of genus $1$ and hence is not inductive.  Finally the group is not dihedral: its center has order $4$, whereas
a dihedral group of the same order $4q$ has center of order $2$.
Therefore case \textup{(ia)} is excluded.  Thus
$(X,\Aut(X))$ is exactly of type \textup{(ib)}.

\begin{corollary}\label{cor:problem11}
Case \textup{(ib)} occurs exactly in the genera
\[
        g=2^h+1,\qquad h\ge2.
\]
In particular, there are infinitely many genera for which curves of type
\textup{(ib)} exist.
\end{corollary}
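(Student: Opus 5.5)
The corollary has two halves: a necessity statement, that every genus in case \textup{(ib)} has the form $2^h+1$ with $h\ge2$, and an existence statement, that every such genus actually occurs. I will treat them separately and then combine.

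\emph{Necessity.} This is exactly Proposition~\ref{prop:no-genus3}. In case \textup{(ib)} the group $S$ is a $2$-group of order $4(g-1)$, so $g-1$ is a power of $2$. The inductive involution $a$ forces $|S/\langle a\rangle|=2(g-1)\ge8$, which excludes $g=3$. Hence $g=2^h+1$ with $h\ge2$.

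\emph{Existence for $h\ge3$.} I take any ordinary elliptic curve $E$; one exists over $k$, for instance any curve with $j\ne0$. Together with a nonzero invariant differential $\omega$ and any $\lambda\in k^\times$, this gives the curve $X(E,\omega,\lambda,2^h)$. By Theorem~\ref{thm:main} and the discussion after Proposition~\ref{prop:fullaut}, this curve has genus $2^h+1$, and $(X,\Aut(X))$ is of type \textup{(ib)}.

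\emph{Existence for $h=2$, the genuinely new step.} Here $g=5$, and I would use the subgroup $S=\langle\widetilde\tau,w,u\rangle$ of order $16$ built in the proof of Theorem~\ref{thm:main}. That part of the proof never used $h\ge3$: it gives $g(X)=5$, ordinarity, $S\cong\Dih(C_4)\times C_2$, and no global fixed point. I would then verify the hypotheses and the type directly.
\begin{itemize}
\item Hypothesis \textup{(I)} holds because $|S|=16\ge8$ and $16>2(g-1)=8$.
\item Case \textup{(ii)} is excluded because it requires $|S|=2g+2=12$.
\item Case \textup{(iii)} is excluded because $u$ is central with quotient $E$ of genus $1$, so $u$ is not inductive.
\item Case \textup{(ia)} is excluded because $Z(S)$ has order $4$, while the dihedral group of order $16$ has center of order $2$.
\end{itemize}
The theorem of Giulietti--Korchm\'aros then forces case \textup{(ib)}. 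This uses only the subgroup $S$, not the full automorphism group, and this is presumably what Remark~\ref{rem:smallq} records.

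\emph{Main obstacle.} The only delicate point is making sure the $q=4$ construction genuinely satisfies every step used for $q\ge8$. In particular, Lemma~\ref{lem:principalparts} requires only that $q$ be even. The Castelnuovo--Severi argument, which fails at $g=5$, is needed only for the uniqueness of the bielliptic involution and for the full automorphism group, not for the type of $S$. Infinitely many genera then follow at once, since $h$ ranges over all integers $\ge2$.
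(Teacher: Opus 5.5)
Your proposal is correct and follows the paper's proof. Necessity comes from Proposition~\ref{prop:no-genus3}, existence for $h\ge3$ from Theorem~\ref{thm:main}, and for $h=2$ you check directly that $S\cong\Dih(C_4)\times C_2$ excludes cases \textup{(ia)}, \textup{(ii)} and \textup{(iii)}, which is exactly the content of Remark~\ref{rem:smallq} that the paper cites for that case.
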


\begin{proof}
Necessity is Proposition~\ref{prop:no-genus3}.  For $h\ge3$ existence is
Theorem~\ref{thm:main}, and the case $h=2$ is given by
Remark~\ref{rem:smallq}.
\end{proof}

\section{Ramification and the group action}\label{sec:shortorbits}

\subsection{Short orbits and ramification}

$X$ is ordinary, so $\gamma(X)=g(X)=q+1$, and therefore
\[
 |\Aut(X)|=4q=4(\gamma(X)-1).
\]
Thus the full automorphism group is itself a $2$-group attaining
Nakajima's bound.  The orbit structure can also be read off completely.

\begin{proposition}\label{prop:shortorbits}
Let $G=\Aut(X)$.  Then
\[
 X/G\cong \mathbf P^1.
\]
Moreover, $G$ has exactly two short orbits $\Omega_1,\Omega_2$, with
\[
 |\Omega_1|=2q,\qquad |\Omega_2|=q.
\]
The second orbit is the ramification locus of the Artin--Schreier map
$\pi:X\to E$; its point stabilizers are isomorphic to $C_2\times C_2$.
The stabilizer of a point in the first short orbit is cyclic of order $2$.
All remaining $G$-orbits are long.
\end{proposition}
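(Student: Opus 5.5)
The plan is to compute everything by factoring the quotient through $\pi:X\to E$ and $E\to E/\Aut(E,T)$, where $\Aut(E,T)=T\rtimes\langle[-1]\rangle\cong\Dih(C_q)$ by Proposition~\ref{prop:fullaut}. Since $G/\langle u\rangle\cong\Aut(E,T)$, we have $X/G\cong E/\Aut(E,T)=(E/T)/\langle[-1]\rangle$. Here $E/T$ is again an elliptic curve $E'$, since translations act freely. The elliptic involution $[-1]$ descends to an involution of $E'$ with a fixed point, namely the image of $0$. A genus-one curve modulo an involution with a fixed point is $\mathbf P^1$. This gives $X/G\cong\mathbf P^1$.

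To find the short orbits, I first find the points of $E$ with nontrivial stabilizer in $\Aut(E,T)$. Translations by nonzero elements of $T$ are fixed-point free. The involutions in $\Dih(C_q)$ outside $T$ have the form $R\mapsto -R+Q$ with $Q\in T$, and $R$ is fixed exactly when $2R=Q$. Since $E$ is ordinary, $E[2](k)$ has order $2$, so for each $Q$ there are exactly two solutions $R$. Altogether these fixed points make up the set $\{R:2R\in T\}=E[2q](k)$, which has $2q$ points.

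\begin{itemize}
\item The points of $T$ form one $\Aut(E,T)$-orbit of size $q$, with stabilizers of order $2$ (for instance $\{1,[-1]\}$ at $0$).
\item The points of $E[2q](k)\setminus T$ form a single orbit of size $q$. It is a coset of $T$, on which $T$ acts transitively, and each point has stabilizer of order $2$.
\item Every other point of $E$ has trivial stabilizer.
\end{itemize}

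Next I lift to $X$. Over $T$ the map $\pi$ is totally ramified, so the $q$ points above $T$ form one $G$-orbit $\Omega_2$ of size $q$. Their stabilizers have order $4q/q=4$ and contain $u$ together with a lift of an involution. By Lemma~\ref{lem:ordinary-weak} these stabilizers are elementary abelian, hence isomorphic to $C_2\times C_2$. Over the orbit $E[2q](k)\setminus T$ the map $\pi$ is unramified, so its preimage has $2q$ points. The claim that this preimage is a single $G$-orbit $\Omega_1$ with stabilizer of order $2$ is the main obstacle, because one must exclude a splitting into two $G$-orbits of size $q$ with stabilizers of order $4$. Such stabilizers would contain $u$, but $u$ acts freely away from $\Omega_2$, so the stabilizers have order at most $2$ and the orbit has size exactly $2q$.

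It remains to check that no further short orbits exist. Over points of $E$ with trivial stabilizer, the stabilizer in $G$ lies in $\langle u\rangle$ and acts trivially only over the branch locus $T$, so those orbits are long. As a consistency check, I would verify the Deuring--Shafarevich formula~\eqref{eq:DS}:
\[
 \gamma(X)-1=q=4q(0-1)+(4q-2q)+(4q-q)=q.
\]
This confirms that $\Omega_1$ and $\Omega_2$ account for all short orbits.
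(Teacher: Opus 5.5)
Your proof is correct, but it takes a different route from the paper's. Both arguments obtain $X/G\cong\mathbf P^1$ in the same way, and both identify $\Omega_2$ as the totally ramified fibre over $T$ with a Klein four stabilizer.

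For the remaining short orbits, the paper never computes fixed points on $E$. It notes that every point stabilizer meets the freely acting $C\cong C_q$ trivially, so it injects into $G/C$, which has order $4$. It then uses the Deuring--Shafarevich formula for the ordinary curve $X$ to do the bookkeeping. The short orbits must contribute $5q$ in total, and $\Omega_2$ accounts for $3q$. A second orbit of length $q$ would overshoot, so exactly one orbit of length $2q$ remains.

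You instead determine the fixed locus of $T\rtimes\langle[-1]\rangle$ on $E$ explicitly as $E[2q](k)$, using $|E[2](k)|=2$. You split it into the two $\Aut(E,T)$-orbits $T$ and $E[2q](k)\setminus T$. You then lift, using that $G_P\to\Aut(E,T)_{\pi(P)}$ has kernel $G_P\cap\langle u\rangle$, which is trivial off the branch locus. In your argument the Deuring--Shafarevich formula serves only as a check.

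The paper's route is shorter and avoids any fixed-point computation on $E$. Yours is more explicit: it locates
\[
 \Omega_1=\pi^{-1}\bigl(E[2q](k)\setminus E[q](k)\bigr).
\]
This is the fibre over exactly the nontrivial coset that serves as the branch locus in the dihedral Giulietti--Korchm\'aros construction recalled in the introduction.

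Two phrases could be tightened. Where you write that order-$4$ stabilizers ``would contain $u$'', that is the kernel statement just mentioned. Where you write that $u$ ``acts trivially only over $T$'', you mean that $u$ has fixed points only over $T$.
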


\begin{proof}
Write $T=\langle P_0\rangle\cong C_q$.  Since $u$ is central and
$X/\langle u\rangle=E$, we have
\[
 X/G\cong E/(T\rtimes\langle[-1]\rangle).
\]
The quotient $E/T$ is again an elliptic curve, and $[-1]$ descends to its
elliptic involution.  Hence
\[
 E/(T\rtimes\langle[-1]\rangle)
 \cong (E/T)/\langle[-1]\rangle\cong\mathbf P^1.
\]

Let $\Omega_2$ be the set of ramification points of $\pi$.  There is one
point of $X$ above each point of $T$, so $|\Omega_2|=q$.  The translation
subgroup acts transitively on this set; hence $\Omega_2$ is a $G$-orbit.
Its stabilizer has order $|G|/q=4$.  Since the cyclic translation subgroup
acts freely on $X$, the stabilizer meets it trivially; it contains the
deck involution $u$.  If $w_P$ denotes the reflection fixing the
corresponding point $P\in T$ (its projection to $E$ is
$\tau_{2P}\circ[-1]$), then the stabilizer is
$\{1,u,w_P,uw_P\}$.  Since $u$ and $w_P$ are commuting involutions, this
stabilizer is a Klein four group.

Now apply the Deuring--Shafarevich formula to the full $2$-group $G$.
Since $X$ is ordinary and $X/G$ is rational,
\[
 q=\gamma(X)-1
   =-4q+\sum_{\Omega\text{ short}}(4q-|\Omega|).
\]
The Deuring--Shafarevich identity therefore requires the total
short-orbit contribution to be $5q$.  The known orbit $\Omega_2$
contributes $3q$, so exactly $2q$ remains.  On the other hand, because the
cyclic subgroup $C_q$ acts freely, every point stabilizer injects into
$G/C_q$, which has order $4$.  Thus every further short orbit has length
$q$ or $2q$.  A further orbit of length $q$ would already contribute
$3q$, which is too much.  The remaining contribution is exactly $2q$,
so there is precisely one further short orbit, it has length $2q$, and its
stabilizer has order $2$.
\end{proof}

\begin{corollary}\label{cor:weakram}
The action of $G$ on $X$ is weakly ramified.  More explicitly, if
$P\in\Omega_2$, then
\[
 G_P^{(0)}=G_P^{(1)}\cong C_2\times C_2,
 \qquad G_P^{(2)}=1,
\]
and the different exponent at $P$ is $6$.  If $P\in\Omega_1$, then
\[
 G_P^{(0)}=G_P^{(1)}\cong C_2,
 \qquad G_P^{(2)}=1,
\]
and the different exponent at $P$ is $2$.  Consequently the total
different of $X\to X/G$ has degree $10q$.
\end{corollary}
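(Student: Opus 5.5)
The plan is to combine Lemma~\ref{lem:ordinary-weak} with Proposition~\ref{prop:shortorbits}, and then account for the different orbit by orbit. Since $G=\Aut(X)$ is a $2$-group (of order $4q$) acting on the ordinary curve $X$, Lemma~\ref{lem:ordinary-weak} shows at once that the action is weakly ramified. Its proof gives more: at every ramified point $d_P=2(|G_P|-1)$, so $G_P^{(2)}=1$. Also $G_P^{(0)}=G_P^{(1)}=G_P$, because in characteristic $2$ the tame quotient $G_P^{(0)}/G_P^{(1)}$ has odd order and $G_P$ is a $2$-group. So only the stabilizers are needed, and Proposition~\ref{prop:shortorbits} supplies them: for $P\in\Omega_2$ one has $G_P\cong C_2\times C_2$, and for $P\in\Omega_1$ one has $G_P\cong C_2$. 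Hilbert's different formula with $G_P^{(i)}=G_P$ for $i=0,1$ and trivial for $i\ge2$ then gives $d_P=2(|G_P|-1)$, which is $6$ on $\Omega_2$ and $2$ on $\Omega_1$.

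As an independent check on $\Omega_2$, I would argue directly rather than only quoting the lemma. The subgroup $\langle u\rangle$ has lower jump $1$ by the Artin--Schreier ramification statement of Section~\ref{sec:prelim}, so its second ramification group is trivial. The equality
\[
G_P^{(2)}\cap\langle u\rangle=\langle u\rangle^{(2)}=1
\]
(lower numbering is compatible with subgroups) shows that $G_P^{(2)}$ has order at most $2$. The lemma's global equality argument then forces $G_P^{(2)}=1$. I do not regard this check as essential.

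It remains to total the different. Only points of short orbits contribute, so the degree is
\[
|\Omega_2|\cdot 6+|\Omega_1|\cdot 2=6q+4q=10q.
\]
Riemann--Hurwitz provides a consistency check: $2g(X)-2=2q$ should equal $4q(2\cdot 0-2)+10q=2q$, and it does. The only step that needs care is the identification $G_P^{(1)}=G_P$, which rests on wild ramification being forced because $G$ is a $2$-group in characteristic $2$. Everything else is a direct reading of Lemma~\ref{lem:ordinary-weak} and Proposition~\ref{prop:shortorbits}.
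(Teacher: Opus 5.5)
Your proposal is correct and matches the paper's proof: weak ramification and $G_P^{(0)}=G_P^{(1)}=G_P$ come from Lemma~\ref{lem:ordinary-weak}, the stabilizers come from Proposition~\ref{prop:shortorbits}, Hilbert's formula gives $d_P=2(|G_P|-1)$, and the total $6q+4q=10q$ is checked against Riemann--Hurwitz. Your ``independent check'' on $\Omega_2$ is harmless but redundant, since it still relies on the lemma's global equality argument to finish.
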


\begin{proof}
By Lemma~\ref{lem:ordinary-weak}, the action is weakly ramified.  Hence
$G_P^{(2)}=1$, while for a ramified point of a $2$-group action one has
$G_P^{(0)}=G_P^{(1)}=G_P$.  Proposition~\ref{prop:shortorbits} therefore
gives local different exponent
\[
        2(|G_P|-1),
\]
which is $6$ on $\Omega_2$ and $2$ on $\Omega_1$.  Thus
\[
 \deg\operatorname{Diff}(X/(X/G))
   =q\cdot6+2q\cdot2=10q.
\]
This also checks Riemann--Hurwitz:
\[
 2g(X)-2=2q=-2|G|+10q=-8q+10q.
\]
\end{proof}

\section{The converse and cyclic quotients}\label{sec:families}

\subsection{Isomorphisms in the family}

The parameter occurring in the construction is intrinsic.

\begin{proposition}\label{prop:parameters}
Let $q=2^h$ with $h\ge3$.  Let $E_i/k$ be ordinary elliptic curves and
let $\omega_i$ be nonzero invariant differentials.  Then
\[
 X(E_1,\omega_1,\lambda_1,q)\cong
 X(E_2,\omega_2,\lambda_2,q)
\]
if and only if there is an isomorphism of elliptic curves
$\varphi:E_1\to E_2$ such that
\[
        \varphi^*(\lambda_2^{-1}\omega_2)
        =\lambda_1^{-1}\omega_1.
\]
In particular,
\[
        X(E,\omega,\lambda,q)
        \cong X(E,\lambda^{-1}\omega,1,q).
\]
Thus, after normalizing the residue to $1$, the curves are parametrized
by isomorphism classes of pairs $(E,\eta)$, where $E$ is ordinary and
$0\ne\eta\in H^0(E,\Omega_E^1)$.
\end{proposition}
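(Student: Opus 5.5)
The plan is to prove the two directions separately, using the rigidity already established: for $h\ge3$ the deck involution is the unique bielliptic involution (Theorem~\ref{thm:main}(ii)), and the isomorphism class of $X(E,\omega,\lambda,q)$ does not depend on the choice of $f$.

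\emph{The ``if'' direction.} Suppose $\varphi:E_1\to E_2$ is an isomorphism of elliptic curves with $\varphi^*(\lambda_2^{-1}\omega_2)=\lambda_1^{-1}\omega_1$. Then $\varphi$ is a group isomorphism, so it maps $T_1=E_1[q](k)$ onto $T_2=E_2[q](k)$. Let $f_2$ be a function used to build $X(E_2,\omega_2,\lambda_2,q)$, and put $f_1=\varphi^*f_2$. Its poles are simple and lie exactly on $T_1$. For $P\in T_1$ the residues transform as
\[
\Res_P(f_1\lambda_1^{-1}\omega_1)=\Res_{\varphi(P)}(f_2\lambda_2^{-1}\omega_2)=1 ,
\]
so $\Res_P(f_1\omega_1)=\lambda_1$. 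Hence $f_1$ is an admissible choice for $X(E_1,\omega_1,\lambda_1,q)$. The map $\varphi$ then lifts to an isomorphism of the Artin--Schreier covers via $z\mapsto z$. The special case $X(E,\omega,\lambda,q)\cong X(E,\lambda^{-1}\omega,1,q)$ follows by taking $\varphi=\mathrm{id}$; the same computation applies directly, because rescaling the differential rescales the residues.

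\emph{The ``only if'' direction.} Let $\Psi:X_1\to X_2$ be an isomorphism. Since $h\ge3$, $u_i$ is the unique bielliptic involution of $X_i$, so $\Psi u_1\Psi^{-1}=u_2$. Therefore $\Psi$ descends to an isomorphism of genus-one curves $\bar\psi:E_1\to E_2$ with $\pi_2\Psi=\bar\psi\pi_1$. This $\bar\psi$ maps the branch locus $T_1$ onto $T_2$, but it need not fix $0$. Write $\bar\psi(0)=Q\in T_2$ and set $\varphi=\tau_{-Q}\circ\bar\psi$, which is an isomorphism of elliptic curves. Translation by $Q\in T_2$ lifts to an automorphism $\widetilde\tau$ of $X_2$ (constructed in the proof of Theorem~\ref{thm:main}), so replacing $\Psi$ by $\widetilde\tau^{-1}\Psi$ lets us assume $\bar\psi=\varphi$.

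Next we compare the Artin--Schreier equations. The isomorphism $\Psi$ identifies $k(X_2)=k(E_2)(z_2)$ with $k(X_1)=k(E_1)(z_1)$ compatibly with $\varphi^*$. Both are quadratic Artin--Schreier extensions of $k(E_1)$, determined by $f_1$ and $\varphi^*f_2$. By Artin--Schreier theory in characteristic $2$,
\[
\varphi^*f_2=f_1+r^2+r
\]
for some $r\in k(E_1)$. The two sides have poles only at $T_1$, and they are simple. Hence $r^2+r$ has at most simple poles at the points of $T_1$ and no other poles. A pole of $r$ of order $m$ would give a pole of order $2m$, so $r$ is regular everywhere and therefore constant. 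Consequently $\varphi^*f_2-f_1$ is constant, and for each $P\in T_1$
\[
\Res_P\bigl((\varphi^*f_2)\,\varphi^*\omega_2\bigr)=\lambda_2,
\qquad
\Res_P\bigl(f_1\,\varphi^*\omega_2\bigr)=\lambda_2 .
\]

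\emph{Conclusion and main obstacle.} Since $\varphi^*\omega_2=c\,\omega_1$ for some $c\in k^\times$, we get $\lambda_2=c\lambda_1$, which is exactly $\varphi^*(\lambda_2^{-1}\omega_2)=\lambda_1^{-1}\omega_1$. The final parametrization statement is then a restatement of this equivalence. The main obstacle is the descent step: I need uniqueness of the bielliptic involution so that $\Psi$ commutes with the deck involutions, and this is where $h\ge3$ is used. I also need the pole-order argument showing that $r$ is constant, which forces the residues to be preserved rather than merely the branch loci.
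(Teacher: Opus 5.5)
Your proof is correct and follows essentially the same route as the paper. In the ``only if'' direction you descend $\Psi$ via uniqueness of the bielliptic involution, move the origin using a lifted translation from $T_2$, show $r$ is constant by the pole-order argument, and compare residues. In the ``if'' direction you pull back $f_2$. The only differences are cosmetic: you carry $\lambda_1,\lambda_2$ throughout instead of first normalizing the residues to $1$, and in the ``if'' direction you take $\varphi^*f_2$ itself as an admissible $f_1$ rather than invoking Lemma~\ref{lem:principalparts} and an Artin--Schreier coordinate change.
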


\begin{proof}
The last displayed isomorphism follows at once from
\[
        \Res_P(f\omega)=\lambda
        \quad\Longleftrightarrow\quad
        \Res_P(f\,\lambda^{-1}\omega)=1.
\]
Suppose that
\[
 \Phi:X(E_1,\eta_1,1,q)\longrightarrow X(E_2,\eta_2,1,q)
\]
is an isomorphism.  The bielliptic involution is unique, so $\Phi$
descends to an isomorphism $\psi:E_1\to E_2$ carrying
$E_1[q](k)$ onto $E_2[q](k)$.  In particular $\psi(0)\in E_2[q](k)$.
Translation by $-\psi(0)$ preserves the branch set and lifts to the
second cover.  After composing $\Phi$ with such a lift, we may therefore
assume that the descended map preserves the origins; denote it by
$\varphi:E_1\to E_2$.  Write
\[
        \varphi^*\eta_2=c\eta_1,\qquad c\in k^\times.
\]
Let $f_i$ be Artin--Schreier functions defining the two covers.  An
isomorphism of quadratic Artin--Schreier extensions over $\varphi$
sends an Artin--Schreier generator to the other generator plus an
element of the base field.  Hence
\[
        \varphi^*f_2-f_1=r^2+r
\]
for some $r\in k(E_1)$.  The left-hand side has at most simple poles,
whereas, if $r$ has a pole of order $m>0$, then $r^2+r$ has a pole of
order $2m$.  Thus $r$ has no pole and is constant.  For
$P\in E_1[q](k)$ we then have
\[
 \Res_P(\varphi^*f_2\,\eta_1)
   =c^{-1}\Res_{\varphi(P)}(f_2\eta_2)=c^{-1},
 \qquad
 \Res_P(f_1\eta_1)=1.
\]
Since the difference has zero residue, $c^{-1}=1$, and therefore
$c=1$.

Conversely, suppose that $\varphi:E_1\to E_2$ preserves origins and
satisfies $\varphi^*\eta_2=\eta_1$.  Then $\varphi$ carries
$E_1[q](k)$ onto $E_2[q](k)$, and $\varphi^*f_2$ and $f_1$ have the same
residues at every point of $E_1[q](k)$.  By
Lemma~\ref{lem:principalparts} their difference is constant; since $k$
is algebraically closed, that constant is of the form $e^2+e$.  The
change of Artin--Schreier coordinate $z\mapsto z+e$ gives the required
isomorphism.
\end{proof}

\begin{corollary}\label{cor:intrinsic-parameter}
For fixed $q\ge8$, after normalizing the residue to $1$, the isomorphism
classes obtained from the construction are in bijection with the
isomorphism classes of pairs $(E,\eta)$, where $E$ is ordinary elliptic
and $0\ne\eta\in H^0(E,\Omega_E^1)$.
\end{corollary}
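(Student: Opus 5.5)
This is essentially a restatement of Proposition~\ref{prop:parameters}, so the plan is to assemble that proposition into a bijection. I would define a map $\Theta$ from the set of isomorphism classes of pairs $(E,\eta)$ to the set of isomorphism classes of curves produced by the construction of Theorem~\ref{thm:main} with $q$ fixed: $\Theta[(E,\eta)]=[X(E,\eta,1,q)]$. Here an isomorphism of pairs $(E_1,\eta_1)\to(E_2,\eta_2)$ means an isomorphism of elliptic curves $\varphi:E_1\to E_2$, preserving origins, with $\varphi^*\eta_2=\eta_1$.

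The proof then has three steps.

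\textbf{Step 1 (well-definedness).} If $(E_1,\eta_1)\cong(E_2,\eta_2)$, then the ``if'' direction of Proposition~\ref{prop:parameters}, with $\lambda_1=\lambda_2=1$, gives $X(E_1,\eta_1,1,q)\cong X(E_2,\eta_2,1,q)$. The class of $X(E,\eta,1,q)$ is itself unambiguous because Theorem~\ref{thm:main} shows it is independent of the choice of $f$.

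\textbf{Step 2 (surjectivity).} Any curve from the construction is $X(E,\omega,\lambda,q)$ for some $\lambda\in k^\times$. By the displayed isomorphism $X(E,\omega,\lambda,q)\cong X(E,\lambda^{-1}\omega,1,q)$ in Proposition~\ref{prop:parameters}, it equals $\Theta[(E,\lambda^{-1}\omega)]$, and $\lambda^{-1}\omega$ is a nonzero invariant differential.

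\textbf{Step 3 (injectivity).} If $X(E_1,\eta_1,1,q)\cong X(E_2,\eta_2,1,q)$, the ``only if'' direction of Proposition~\ref{prop:parameters} supplies an isomorphism $\varphi$ with $\varphi^*\eta_2=\eta_1$. This is exactly an isomorphism of pairs.

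The main point to check is that the ``only if'' direction genuinely produces an origin-preserving isomorphism, so that it lands in the correct equivalence relation on pairs. This is handled in that proof by composing with a lift of a translation by a point of $E_2[q](k)$. It also uses uniqueness of the bielliptic involution, which requires $q\ge8$ via the Castelnuovo--Severi inequality. This is why the hypothesis $q\ge8$ appears, and no further obstacle is expected.
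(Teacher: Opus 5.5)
Your proposal is correct and matches the paper, which states this corollary as an immediate consequence of Proposition~\ref{prop:parameters}: the ``if'' and ``only if'' directions give well-definedness and injectivity, and the rescaling $X(E,\omega,\lambda,q)\cong X(E,\lambda^{-1}\omega,1,q)$ gives surjectivity. You also correctly identify that $q\ge8$ enters only through uniqueness of the bielliptic involution, which is used to descend an isomorphism and then normalize it to preserve origins.
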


\subsection{Artin--Schreier descent and the converse}

The residue construction is not merely a source of examples.  It
recovers every curve occurring in case \textup{(ib)}.

\begin{lemma}[Local Artin--Schreier residue criterion]\label{lem:localASresidue}
Let $K=k((t))$, where $k$ is algebraically closed of characteristic $2$,
and let $\vartheta$ be a regular differential which is nonzero at
$t=0$.  Let $g_1,g_2\in K$ have at most a simple pole.  If
\[
        \Res(g_1\vartheta)=\Res(g_2\vartheta),
\]
then $g_1-g_2\in k[[t]]$.  In particular the Artin--Schreier equation
\[
        y^2+y=g_1-g_2
\]
is unramified at $t=0$.
\end{lemma}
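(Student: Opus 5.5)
The plan is to compute directly in the Laurent series field. Since $\vartheta$ is regular and nonzero at $t=0$, write $\vartheta=v(t)\,dt$ with $v\in k[[t]]$ and $v(0)=v_0\ne0$. Put $h=g_1-g_2$. Because each $g_i$ has at most a simple pole, so does $h$, and we may write
\[
        h=\frac{c}{t}+h_0,\qquad c\in k,\quad h_0\in k[[t]].
\]
Residues are $k$-linear, so the hypothesis gives $\Res(h\vartheta)=\Res(g_1\vartheta)-\Res(g_2\vartheta)=0$.

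Next I compute this residue explicitly. We have
\[
        h\vartheta=\Bigl(\frac{c}{t}+h_0\Bigr)v\,dt .
\]
The term $h_0v\,dt$ is regular, so it contributes nothing. The coefficient of $t^{-1}\,dt$ in $c\,v/t$ is $c\,v_0$. Hence $0=\Res(h\vartheta)=c\,v_0$. Since $v_0\ne0$, we get $c=0$, and therefore $h=h_0\in k[[t]]$. This is the first assertion.

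For the "in particular" clause, I use that $h$ has no pole at $t=0$. It is standard (compare \cite[Proposition~III.7.8]{Stichtenoth}) that the extension $K(y)/K$ with $y^2+y=h$ is then unramified. I would give a short direct argument as well. Write $h=h(0)+h_1$ with $h_1\in tk[[t]]$. Choose $e\in k$ with $e^2+e=h(0)$, using that $k$ is algebraically closed. The series $s=\sum_{n\ge0}h_1^{2^n}$ converges $t$-adically and satisfies $s^2+s=h_1$. Thus $y^2+y=h$ has the root $e+s\in k[[t]]$, and the equation splits completely over $K$; in particular it is unramified.

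I expect no real obstacle here. The only points needing care are that the residue of $h\vartheta$ is independent of the uniformizer, which is standard, and that we use $v(0)\ne0$ rather than merely $\vartheta\ne0$. The latter is exactly where the nonvanishing of the invariant differential on $E$ will enter when this lemma is applied.
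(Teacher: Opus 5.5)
Your argument is correct and is essentially the paper's: writing $\vartheta=v(t)\,dt$ and $g_1-g_2=c/t+h_0$, the residue is $c\,v(0)$, which forces $c=0$. For the final clause the paper only notes that $\partial_y\bigl(y^2+y-(g_1-g_2)\bigr)=1$. Your explicit root $e+\sum_{n\ge0}h_1^{2^n}\in k[[t]]$ makes the same point concretely, and in fact shows that the extension splits.
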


\begin{proof}
Write $\vartheta=v(t)\,dt$ with $v(0)\ne0$.  The polar part of
$g_1-g_2$ is $c/t$ for some $c\in k$, and the residue of
$(g_1-g_2)\vartheta$ is $cv(0)$.  Hence $c=0$.  Thus
$g_1-g_2\in k[[t]]$, and the last assertion follows since the derivative
of $y^2+y-(g_1-g_2)$ with respect to $y$ is $1$.
\end{proof}

\begin{ASdescent}\label{lem:ASdescent}
Let $k$ be an algebraically closed field of characteristic $2$, let $E/k$
be an ordinary elliptic curve, and let
\[
        T=E[q](k)\cong C_q,\qquad q=2^h,\quad h\ge1.
\]
Write $\phi:E\to F:=E/T$ for the quotient isogeny.  Let
$\pi:Y\to E$ be a connected Artin--Schreier double cover with deck
involution $u$.  Suppose that the branch locus of $\pi$ is exactly $T$,
with lower jump $1$ at every branch point, and that the translations by
$T$ lift to a subgroup $\widetilde T\leq\Aut(Y)$ such that
\[
        \widetilde T\cong T,\qquad
        \widetilde T\cap\langle u\rangle=1,
        \qquad [\widetilde T,u]=1.
\]
Then, for every nonzero invariant differential $\omega$ on $E$, there is
a unique $\lambda\in k^\times$ such that
\[
        Y\cong X(E,\omega,\lambda,q)
\]
as double covers of $E$.
\end{ASdescent}

\begin{proof}
Put $Z=Y/\widetilde T$.  Since $\widetilde T$ commutes with $u$ and has
trivial intersection with $\langle u\rangle$, the subgroup
$\widetilde T\times\langle u\rangle$ has order $2q$, and its fixed field
is $k(F)$.  Inside $k(Y)$ we have
\[
 k(Z)=k(Y)^{\widetilde T},\qquad k(E)=k(Y)^{\langle u\rangle}.
\]
Moreover,
\[
 k(Z)\cap k(E)
   =k(Y)^{\langle\widetilde T,u\rangle}=k(F).
\]
Consequently $k(Y)=k(Z)k(E)$ over $k(F)$, so $Y$ is the normalization of
$Z\times_F E$.

The isogeny $\phi:E\to F$ is finite \emph{\'etale}: its kernel is the reduced
constant subgroup scheme determined by the $q$ distinct points of $T$.
The pullback of
$Z\to F$ along $\phi$ is $Y\to E$.  Since the latter is ramified exactly
at the fibre $T=\phi^{-1}(O_F)$, the cover $Z\to F$ is ramified exactly
at $O_F$.  Ramification groups are unchanged by unramified extension of
complete local fields; hence the lower jump of $Z/F$ at $O_F$ is again
$1$.

Let $\theta$ be the unique invariant differential on $F$ satisfying
$\phi^*\theta=\omega$.  Choose a reduced local Artin--Schreier equation
for $Z/F$ at $O_F$,
\[
        y^2+y=\frac{a}{t}+b(t),\qquad a\ne0,\quad b(t)\in k[[t]],
\]
and define
\[
        \lambda=\Res_{O_F}\!\left(\frac{a}{t}\theta\right).
\]
Because the pole is simple, changing the reduced Artin--Schreier
representative by $r^2+r$ cannot alter its simple polar coefficient;
thus $\lambda$ is independent of the chosen reduced equation and is
nonzero.

Let $X_\lambda=X(E,\omega,\lambda,q)$, and let
$Z_\lambda=X_\lambda/C_\lambda$, where $C_\lambda\cong T$ is the cyclic
subgroup lifting the translations by $T$.  By the same quotient and
unramified-base-change argument, $Z_\lambda\to F$ is ramified only at
$O_F$, has lower jump $1$, and has local residue $\lambda$ there.
Let $\xi,\xi_\lambda\in k(F)/\wp(k(F))$ be the global
Artin--Schreier classes of $Z/F$ and $Z_\lambda/F$.  At $O_F$, choose
reduced representatives of their images in the completed local field.
They have at most a simple pole and the same residue with respect to
$\theta$; hence Lemma~\ref{lem:localASresidue} shows that the local
class of $\xi-\xi_\lambda$ is unramified at $O_F$.  At every other
place both local classes are unramified, and so is their difference.
Thus $\xi-\xi_\lambda$ is an everywhere-unramified quadratic
Artin--Schreier class on $F$.

Because $F$ is an ordinary elliptic curve, its $2$-rank is $1$;
equivalently, the $\mathbf F_2$-vector space of \emph{\'etale} $C_2$-torsors on
$F$ has dimension $1$.  Hence $F$ has exactly one nontrivial connected
\emph{\'etale} $C_2$-cover.  Let
$T'\le T$ be the subgroup of order $q/2$ (for $q=2$ take $T'=0$).  The
separable isogeny
\[
        E/T'\longrightarrow E/T=F
\]
is a nontrivial \emph{\'etale} double cover, and therefore represents this unique
nonzero unramified Artin--Schreier class.  After pullback along
$\phi:E\to F$ it becomes trivial: the factorization
\[
        E\longrightarrow E/T'\longrightarrow F
\]
defines a section of the pulled-back $C_2$-torsor.  Thus the possible
unramified difference between $Z$ and $Z_\lambda$ disappears after base
change to $E$.  Since the respective pullbacks are $Y$ and $X_\lambda$,
we obtain
\[
        Y\cong X(E,\omega,\lambda,q)
\]
as double covers of $E$.

Finally suppose that the same double cover of $E$ were obtained from
parameters $\lambda$ and $\lambda'$.  If $f,f'$ are corresponding
Artin--Schreier functions, an isomorphism over $E$ gives
$f'-f=r^2+r$ for some $r\in k(E)$.  The left-hand side has at most
simple poles.  Therefore $r$ has no pole, so $r$ is constant, and
$f'-f$ has zero residue at every point of $T$.  Hence
$\lambda'=\lambda$.  This proves uniqueness.
\end{proof}

\begin{remark}\label{rem:descent-sharp}
The condition
$\widetilde T\cap\langle u\rangle=1$ in the descent lemma is essential.
Without it, the inverse image of the translation group need not split as
$C_q\times C_2$: the non-split possibility is cyclic of order $2q$, with
a lift $\widetilde t$ satisfying $\widetilde t^{q}=u$.  This is the
extension pattern produced by the nontrivial \emph{\'etale} twist in the
descent picture.  It is the mechanism exhibited by the dihedral
case-\textup{(ia)} quotient $X^+$ in
Theorem~\ref{thm:centralquotients} below.
\end{remark}

\begin{theorem}[Converse to the construction]\label{thm:converse}
Let $Y/k$ be a smooth projective curve over an algebraically closed field
$k$ of characteristic $2$, and suppose that $Y$ admits a $2$-subgroup
$S\leq\Aut(Y)$ satisfying \textup{(I)} and \textup{(II)} such that
$(Y,S)$ is in case \textup{(ib)} of
Theorem~\ref{thm:GKclassification}.  Put $q=g(Y)-1$.  Then
$q=2^h$ for some $h\ge2$, and there are an ordinary elliptic curve $E/k$
and a nonzero invariant differential $\eta$ on $E$ such that
\[
        Y\cong X(E,\eta,1,q).
\]
If $h\ge3$, then
\[
        S=\Aut(Y)\cong \Dih(C_q)\times C_2.
\]
If $h=2$, then $S\cong\Dih(C_4)\times C_2$ and $S$ is a Sylow
$2$-subgroup of $\Aut(Y)$.
\end{theorem}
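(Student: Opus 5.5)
The plan is to reduce to the Artin--Schreier descent lemma applied to the quotient by the non-inductive central involution $u$. By Proposition~\ref{prop:no-genus3}, $q=g(Y)-1=2^h$ with $h\ge2$. Case (i) gives $\gamma(Y)=g(Y)$, so $Y$ is ordinary. Proposition~\ref{prop:GK-refinement} gives $E:=Y/\langle u\rangle$ of genus $1$, and by Lemma~\ref{lem:ordinary-weak} $E$ is ordinary.

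First I would study $\pi:Y\to E$. Riemann--Hurwitz with $g(Y)=q+1$ gives total different degree $2q$. Since the action is weakly ramified (Lemma~\ref{lem:ordinary-weak}), each branch point has different exponent $2$, i.e.\ lower jump $1$. Hence $\pi$ has exactly $q$ branch points; call this set $B\subset E$. Next, since $u$ is central, $\bar S=S/\langle u\rangle$ acts faithfully on $E$ and preserves $B$. Write $S=(C_0\times\langle u\rangle)\rtimes\langle w\rangle$. Then $C_0\cap\langle u\rangle=1$, so $C_0$ maps isomorphically onto a cyclic subgroup $\bar C_0\le\Aut(E)$ of order $q$.

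The key step is to show that $\bar C_0$ consists of translations. An element of $\Aut(E)$ has the form $\tau_Q\circ\varphi$ with $\varphi\in\Aut(E,0)$. Since $j(E)\neq0$, we have $\Aut(E,0)=\{1,[-1]\}$. The map $\Aut(E)\to\Aut(E,0)$ restricted to $\bar C_0$ has kernel of index at most $2$, and elements of the form $\tau_Q\circ[-1]$ are involutions. So if $\bar C_0$ were not contained in the translations, its generator would have order $2$, contradicting $q\ge4$. Thus $\bar C_0=T$ is a cyclic subgroup of translations of order $q$. As $E$ is ordinary, its $q$-torsion is cyclic, so $T=E[q](k)$. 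Moreover $T$ acts freely on $E$ and preserves $B$, so $B$ is a single $T$-orbit $P+T$. After translating the origin to $P$ (which conjugates $\bar S$ but keeps $T$), we get $B=T$.

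Now every hypothesis of the Artin--Schreier descent lemma holds. The branch locus is $T$ with lower jump $1$. The lift $\widetilde T:=C_0$ satisfies $\widetilde T\cong T$, $\widetilde T\cap\langle u\rangle=1$, and $[\widetilde T,u]=1$. Hence $Y\cong X(E,\omega,\lambda,q)$ for some $\lambda\in k^\times$, and Proposition~\ref{prop:parameters} rewrites this as $X(E,\eta,1,q)$ with $\eta=\lambda^{-1}\omega$. The main obstacle I expect is the bookkeeping around the group $S$: checking that $C_0$ really is a complement to $u$ in its preimage, and handling the translation of origin cleanly.

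Finally, I would identify $S$. For $h\ge3$, Proposition~\ref{prop:fullaut} gives $\Aut(Y)\cong\Dih(C_q)\times C_2$ of order $4q=|S|$, so $S=\Aut(Y)$. For $h=2$, the construction again supplies a subgroup $\Dih(C_4)\times C_2$ of order $16=|S|$. The group $S/\langle u\rangle$ embeds in $\Aut(E,T)\cong\Dih(C_4)$, and $S$ splits as $\bar S\times\langle u\rangle$ by the same split structure. This gives $S\cong\Dih(C_4)\times C_2$. For the Sylow claim, I would invoke Nakajima's bound \eqref{eq:Nakajima}: any $2$-subgroup $H$ satisfies $|H|\le4(\gamma(Y)-1)=16$, so $S$ is a Sylow $2$-subgroup of $\Aut(Y)$.
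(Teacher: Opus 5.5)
Through the application of the Artin--Schreier descent lemma and the case $h\ge3$, your argument is essentially the paper's. The only difference is that you take $g(Y/\langle u\rangle)=1$ from Proposition~\ref{prop:GK-refinement} and the lower jumps from Lemma~\ref{lem:ordinary-weak}, whereas the paper re-derives both from the Deuring--Shafarevich/Riemann--Hurwitz inequality chain and the non-inductivity of $u$; either route is fine. One small point: for $h=2$ you should not cite Proposition~\ref{prop:parameters}, which assumes $h\ge3$. The rescaling $X(E,\omega,\lambda,q)\cong X(E,\lambda^{-1}\omega,1,q)$ follows directly from the residue condition for every $q$.

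The genuine gap is the identification $S\cong\Dih(C_4)\times C_2$ when $h=2$. The sentence ``$S$ splits as $\bar S\times\langle u\rangle$ by the same split structure'' is not an argument. The structural data ($S=(C_0\times\langle u\rangle)\rtimes\langle w\rangle$ with $u$ central, $w$ an involution and $S/\langle u\rangle$ dihedral) are equally satisfied by the twisted group $\langle c,u,w\mid c^q=u^2=w^2=1,\ [c,u]=[u,w]=1,\ wcw=c^{-1}u\rangle$. That group is not isomorphic to $\Dih(C_q)\times C_2$ (Corollary~\ref{cor:ib-group}), so excluding it needs geometric input.

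You already have that input but do not use it. The descent lemma identifies $Y$ with $X(E,\omega,\lambda,4)$ \emph{as double covers of $E$}, so the constructed group $S_0\cong\Dih(C_4)\times C_2$ contains the same central $u$. Both $S$ and $S_0$ lie in the centralizer of $u$, whose descent homomorphism to $\Aut(E)$ has kernel $\langle u\rangle$. Both images are subgroups of order $8$ of $\Aut(E,T)=T\rtimes\langle[-1]\rangle$, which itself has order $8$. Hence the images coincide, and since both groups contain the kernel, $S=S_0$. Alternatively, follow the paper: first prove the Sylow statement from Nakajima's bound, then deduce $S\cong S_0$ from Sylow conjugacy.
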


\begin{proof}
By Proposition~\ref{prop:no-genus3}, case \textup{(ib)} forces
$q=g(Y)-1=2^h$ with $h\ge2$.  Write, as in case \textup{(ib)} of
\cite[Theorem~1.2]{GK2015},
\[
        S=(C\times\langle u\rangle)\rtimes\langle w\rangle,
        \qquad C\cong C_q,
\]
where $S/\langle u\rangle$ is dihedral of order $2q$.  By
Proposition~\ref{prop:GK-refinement}, $u$ is the central non-inductive
involution.  The classification also
gives
\[
        \gamma(Y)=g(Y)=q+1.
\]
Put
\[
        Z:=Y/\langle u\rangle
\]
and write $g_0=g(Z)$ and $\gamma_0=\gamma(Z)$.  Let $r$ be the
number of fixed points of $u$.  The Deuring--Shafarevich formula gives
\begin{equation}\label{eq:converse-DS}
        q=2(\gamma_0-1)+r.
\end{equation}
On the other hand, Riemann--Hurwitz gives
\[
        2q=2(2g_0-2)+\deg\operatorname{Diff}(Y/Z).
\]
Every fixed point of an involution in characteristic $2$ contributes at
least $2$ to the different.  Hence, using \eqref{eq:converse-DS},
\[
\begin{aligned}
        2q
        &\ge 4g_0-4+2r\\
        &=2q+4(g_0-\gamma_0).
\end{aligned}
\]
Since always $\gamma_0\le g_0$, equality must hold throughout.  Thus
$Z$ is ordinary and every ramified point of $Y\to Z$ has different
exponent $2$, equivalently lower jump $1$.

We claim that $g_0=1$.  First, $g_0\ne0$.  Indeed,
$S/\langle u\rangle$ acts faithfully on $Z$, and it contains the image
of $C$, a cyclic group of order $q\ge4$.  But in characteristic $2$ every nontrivial element of
$2$-power order in $\operatorname{PGL}_2(k)=\Aut(\mathbf P^1)$ is
unipotent, and hence has order $2$.  Thus a cyclic subgroup of order
$q\ge4$ cannot act faithfully on a rational curve.

Suppose that $g_0\ge2$.  Since $Z$ is ordinary,
\eqref{eq:converse-DS} gives
\[
        2(g_0-1)=q-r\le q,
\]
and therefore the induced group $S/\langle u\rangle$, of order $2q$,
still satisfies hypothesis \textup{(I)}.  Since $u$ is non-inductive,
hypothesis \textup{(II)} must fail; hence $S/\langle u\rangle$ fixes a
point of $Z$.  By Lemma~\ref{lem:ordinary-weak}, its point stabilizer is
elementary abelian.  This is impossible because it contains the image of
a generator of $C$, which has order $q\ge4$.  Hence $g_0=1$.

It follows from \eqref{eq:converse-DS} that $\gamma_0=1$ and $r=q$.
Thus $Z$ is an ordinary elliptic curve, the cover $Y\to Z$ is
branched at exactly $q$ points, and the lower jump is $1$ at every branch
point.

The subgroup $C$ descends faithfully to $Z$.  Choose temporarily an
origin $O\in Z(k)$ and a generator $c\in C$.  Since $Z$ is ordinary in
characteristic $2$, $\Aut(Z,O)=\{1,[-1]\}$.  Writing the descended
automorphism as a translation followed by an element of $\Aut(Z,O)$,
its linear part
cannot be $[-1]$, because every automorphism of the form
$\tau_Q\circ[-1]$ is an involution.  The image of $c$, which has order
$q\ge4$, is therefore translation by a point of exact order $q$.  Hence
$C$ acts on $Z$ as translations by a cyclic subgroup $T$ of order $q$.

Let $B\subset Z$ be the branch locus.  It is $C$-invariant.  Translation
by $T$ is free on $Z$, and $|B|=|T|=q$, so $B=Q+T$ for some
$Q\in Z(k)$.  Use $Q$ as the origin of the genus-one curve $Z$, and
denote the resulting elliptic curve by $E$.  If $+$ denotes the old group
law, the new group law is
\[
        P\oplus P'=P+P'-Q.
\]
Thus the old translation $P\mapsto P+t$ is translation, for the new
group law, by the point $Q+t$, and
\[
        (Q+t_1)\oplus(Q+t_2)=Q+(t_1+t_2).
\]
Consequently $B=Q+T$ is a cyclic subgroup of order $q$ in the new group
law, and every element of $B$ is killed by $q$.  Since an ordinary
elliptic curve in characteristic $2$ has exactly $q=2^h$ geometric
$q$-torsion points, it follows that
\[
        B=E[q](k),
\]
and the descended action of $C$ is exactly translation by $E[q](k)$.

Choose a nonzero invariant differential $\omega$ on $E$.  The double
cover $Y\to E$ is branched exactly over $E[q](k)$, with lower jump $1$ at
every branch point.  Moreover, $C$ commutes with $u$ and
$C\cap\langle u\rangle=1$.  Lemma~\ref{lem:ASdescent} therefore
gives a unique $\lambda\in k^\times$ such that
\[
        Y\cong X(E,\omega,\lambda,q).
\]
Putting $\eta=\lambda^{-1}\omega$ simply normalizes all residues to
$1$, and hence
\[
        Y\cong X(E,\eta,1,q).
\]

If $h\ge3$, Proposition~\ref{prop:fullaut} gives
\[
        \Aut(Y)\cong\Dih(C_q)\times C_2
\]
of order $4q$.  Since $|S|=4q$, we obtain $S=\Aut(Y)$.

It remains to consider $h=2$.  The construction itself gives a subgroup
\[
        S_0\cong\Dih(C_4)\times C_2
\]
of $\Aut(Y)$ of order $16$.  Since $Y$ is ordinary of genus $5$, Nakajima's bound, applied to an
arbitrary $2$-subgroup of $\Aut(Y)$, shows that every such subgroup has
order at most $4(\gamma(Y)-1)=16$.  Thus both $S$ and $S_0$ are Sylow $2$-subgroups of
$\Aut(Y)$.  They are conjugate, and in particular
$S\cong\Dih(C_4)\times C_2$.
\end{proof}

\begin{corollary}\label{cor:ib-group}
Let $(Y,S)$ be in case \textup{(ib)} and put $q=g(Y)-1$.  Then
\[
        S\cong\Dih(C_q)\times C_2.
\]
In particular, the other abstract group compatible with the structural
description in Theorem~\ref{thm:GKclassification}, namely
\[
 \left\langle c,u,w\ \middle|\
 c^q=u^2=w^2=1,\ [c,u]=[u,w]=1,\ wcw=c^{-1}u
 \right\rangle,
\]
never occurs in case \textup{(ib)}.
\end{corollary}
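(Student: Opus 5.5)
The plan is to read the corollary off Theorem~\ref{thm:converse}. Let $(Y,S)$ be in case \textup{(ib)} with $q=g(Y)-1$. That theorem gives $q=2^h$ with $h\ge2$, an isomorphism $Y\cong X(E,\eta,1,q)$, and the isomorphism type of $S$. For $h\ge3$ it gives $S=\Aut(Y)\cong\Dih(C_q)\times C_2$. For $h=2$ it gives $S\cong\Dih(C_4)\times C_2$, because $S$ is conjugate to the subgroup $S_0$ produced by the construction, and both are Sylow $2$-subgroups of $\Aut(Y)$ by Nakajima's bound \eqref{eq:Nakajima}. So the first assertion follows directly in both cases.

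For the second assertion, it remains to show that the displayed group $G'=\langle c,u,w\rangle$ is not isomorphic to $\Dih(C_q)\times C_2$. Since every case-\textup{(ib)} group is isomorphic to the latter, $G'$ then cannot occur. I would distinguish the two groups by an invariant, and the center is the natural choice.

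\emph{Center of $\Dih(C_q)\times C_2$.} Since $q\ge4$, we have $Z(\Dih(C_q))=\langle t^{q/2}\rangle$. Hence the center of $\Dih(C_q)\times C_2$ is $C_2\times C_2$, of order $4$.

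\emph{Center of $G'$.} Here $u$ is central. In $G'$ we have $wcw=c^{-1}u$, so $wc^jw=c^{-j}u^j$. An element $c^ju^\epsilon$ commutes with $w$ exactly when $c^{2j}=u^j$. This forces $j$ even (since $c$ and $u$ generate $C_q\times C_2$) and $c^{2j}=1$, so $j\in\{0,q/2\}$. An element of the form $c^ju^\epsilon w$ does not commute with $c$: we have $wc=c^{-1}uw$, so commuting would require $c^{-1}u=c$, that is $c^2=u$, which is impossible. Therefore $Z(G')=\langle c^{q/2},u\rangle$, also of order $4$.

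So the center alone does not separate the groups, and I would switch to a finer invariant: the number of involutions, or the isomorphism type of $G/Z(G)$. Consider first the involutions in $G'$ outside $C\times\langle u\rangle$. We compute
\[
(c^ju^\epsilon w)^2=c^j\,(wc^jw)=c^j c^{-j}u^j=u^j .
\]
So $c^ju^\epsilon w$ is an involution iff $j$ is even, giving exactly $q$ involutions outside $C\times\langle u\rangle$. In $\Dih(C_q)\times C_2$, every element $t^jw$ or $t^jwu$ is an involution, giving $2q$ of them. Inside the abelian normal subgroup $C_q\times C_2$ both groups have exactly $3$ involutions. Any isomorphism would have to match the index-$2$ subgroups $C_q\times C_2$. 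This needs justification: I would identify this subgroup as the unique abelian subgroup of index $2$ containing an element of order $q$, since $q\ge4$. The counts of involutions, $q+3$ versus $2q+3$, then differ.

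As a cleaner alternative, I could count involutions globally, since a global count avoids matching subgroups entirely. The involution-counting step is the one where care is needed, and it is a routine finite computation.
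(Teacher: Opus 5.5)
Your proposal is correct and follows essentially the paper's proof: the first assertion is read off from Theorem~\ref{thm:converse}, and the twisted group is excluded by counting involutions via $(c^ju^\epsilon w)^2=u^j$, which gives $q$ involutions outside $C_q\times\langle u\rangle$ against $2q$ in $\Dih(C_q)\times C_2$. Rely on the global count $q+3\neq 2q+3$, which needs no matching of subgroups, rather than on your intrinsic characterization of $C_q\times\langle u\rangle$. That characterization fails for $q=4$: in the twisted group, $\langle cw,c^2\rangle$ is a second abelian subgroup of index $2$, and it contains the element $cw$ of order $4$.
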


\begin{proof}
By Proposition~\ref{prop:no-genus3}, one has $q=2^h$ with $h\ge2$.
For $h\ge3$ this follows from Theorem~\ref{thm:converse}, which gives
$S=\Aut(Y)\cong\Dih(C_q)\times C_2$.  For $h=2$, the same theorem
shows directly that $S\cong\Dih(C_4)\times C_2$.

For completeness, the alternative displayed in the statement is genuinely
different from the split group.  If $A=C_q\times\langle u\rangle$ and
$c$ generates $C_q$, an involutory automorphism of $A$ fixing $u$ and
inducing inversion on $A/\langle u\rangle$ sends $c$ either to $c^{-1}$
or to $c^{-1}u$.  These give the split and twisted presentations,
respectively.  In the twisted case
\[
        (c^iu^jw)^2=u^i.
\]
Hence there are $q$ involutions outside $A$, whereas
$\Dih(C_q)\times C_2$ has $2q$ involutions outside $A$.  The two groups
are therefore not isomorphic.
\end{proof}

\begin{proposition}[The examples of Giulietti--Korchm\'aros]\label{prop:sporadic-GK}
\leavevmode
\begin{enumerate}[label=\textup{(\alph*)}]
\item \emph{Genus $9$.}  Let $\mu\in\F_{16}$ satisfy $\mu^4+\mu+1=0$, and
let $\mathcal X$ be the smooth projective model of the plane curve of
\cite[Section~6.2]{GK2015},
\begin{equation}\label{eq:GK62}
 (x^7+\mu^5x^5+\mu^{13}x^3+\mu^9x)(z^4+z)
 =x^8+\mu^2x^6+\mu^8x^4+\mu^3x^2+\mu^2 .
\end{equation}
Let $E_\mu$ be the elliptic curve $y^2+xy=x^3+\mu$. Then
\[
 \mathcal X\cong X(E_\mu,dx/x,1,8).
\]
In particular $\mathcal X$ is ordinary of genus $9$, and
$\Aut(\mathcal X)\cong\Dih(C_8)\times C_2$. The pair
$(\mathcal X,\Aut(\mathcal X))$ is of type \textup{(ib)}. In the
parametrization of Section~\ref{sec:families} it corresponds to
$(A,c)=(\mu,1)$.

\item \emph{Genus $5$.}  Let $\mathcal Y$ be the Artin--Mumford curve
\[
        (x^4+x)(y^4+y)=1,
\]
used in \cite[Section~6.4]{GK2015}; for background on the Artin--Mumford
family see also \cite{MontanucciZini}.  For
$\alpha,\beta\in\F_4$ let $\varphi_{\alpha,\beta}(x,y)=(x+\alpha,y+\beta)$,
and let $\rho(x,y)=(y,x)$. Put
\[
 S=\{\varphi_{\alpha,\beta}\}\rtimes\langle\rho\rangle,\qquad
 u=\varphi_{1,1},\qquad
 \bar{\mathcal Y}=\mathcal Y/\langle u\rangle,\qquad \bar S=S/\langle u\rangle .
\]
Then $(\bar{\mathcal Y},\bar S)$ is of type \textup{(ib)}, and
$\bar{\mathcal Y}\cong X(E,\eta,1,4)$ for some pair $(E,\eta)$. The
elliptic curve $E$ is the smooth curve $(s^2+s)(t^2+t)=1$ of bidegree
$(2,2)$ in $\mathbf P^1\times\mathbf P^1$.
\end{enumerate}
\end{proposition}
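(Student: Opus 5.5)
For part (a), I would not try to identify the plane model with the construction by explicit manipulation of \eqref{eq:GK62}. Instead I would exhibit a degree-$2$ map from $\mathcal X$ to $E_\mu$ and apply the Artin--Schreier descent lemma (Lemma~\ref{lem:ASdescent}). The equation has the shape $A(x)(z^4+z)=B(x)$. Setting $w=z^2+z$, we get $z^4+z=w^2+w$, so $w^2+w=B(x)/A(x)$. The intermediate curve $w^2+w=B/A$ should be birational to $E_\mu$ after a change of coordinates; I expect the roots of $A(x)=x(x^6+\dots)$ together with $x=\infty$ to give the $8$ points of $E_\mu[8](k)$ in suitable coordinates. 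Then $z^2+z=w$ is a double cover of $E_\mu$. The degree-$2$ map I would use is $(x,z)\mapsto(x,w)$, with $w$ playing the role of $f$ up to a coordinate change.

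Rather than computing the residues of $w\,dx/x$ directly, I would check the hypotheses of Lemma~\ref{lem:ASdescent}. First, the branch locus should be exactly $E_\mu[8](k)$, with simple poles of $w$. Second, the translations should lift with split intersection: the GK group contains the deck involution $z\mapsto z+1$ and a cyclic lift of order $8$ coming from their explicit $C_8$. The lemma then gives $\mathcal X\cong X(E_\mu,dx/x,\lambda,8)$, and one residue computation at the point at infinity, using $dx/x$ as the invariant differential of $y^2+xy=x^3+\mu$, pins $\lambda=1$. Theorem~\ref{thm:main} and Proposition~\ref{prop:fullaut} then give ordinarity, genus $9$, and $\Aut(\mathcal X)\cong\Dih(C_8)\times C_2$. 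The type (ib) claim follows as in the discussion after Proposition~\ref{prop:fullaut}. For the statement $(A,c)=(\mu,1)$, I would match against the parametrization of Section~\ref{sec:families}.

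For part (b), I would first compute $\bar{\mathcal Y}$ and $\bar S$. The group $S$ has order $32$ and $u$ is central, so $\bar S$ has order $16$. I would find the genus of $\bar{\mathcal Y}$ by Riemann--Hurwitz. The fixed points of $u=\varphi_{1,1}$ lie over $x=\infty$ or $y=\infty$, giving $8$ points in total, and the genus of $\mathcal Y$ is $9$, so $g(\bar{\mathcal Y})=5$.

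Next I would verify that $(\bar{\mathcal Y},\bar S)$ satisfies (I) and (II): $|\bar S|=16>8$, and $\bar S$ fixes no point of $\bar{\mathcal Y}$. The quotient by the image of $\varphi_{1,0}$ gives $s=x^2+x$, $t=y^2+y$ with $(s^2+s)(t^2+t)=1$, which is the genus-one curve $E$. The image of $\varphi_{1,0}$ is a central involution of $\bar S$ with quotient of genus $1$.

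To place the pair in case (ib), I would check that $\bar S\cong\Dih(C_4)\times C_2$. This group is non-dihedral, which rules out (ia), and $|\bar S|=16\ne2g+2=12$, which rules out (ii). The genus-one quotient gives a non-inductive central involution, which rules out (iii). Theorem~\ref{thm:converse} then yields $\bar{\mathcal Y}\cong X(E,\eta,1,4)$, and the proof of that theorem identifies $E$ as $\bar{\mathcal Y}/\langle u'\rangle$ for the non-inductive $u'$, namely the bidegree $(2,2)$ curve.

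The main obstacle I expect is the explicit birational identification in part (a): matching $w^2+w=B/A$ with $E_\mu$, and confirming that the residues of $f\,dx/x$ are all equal to $1$ rather than merely equal to each other. If the direct change of variables is messy, I would fall back on the descent lemma, which only needs the branch locus and the splitting, and then compute the single residue at one point.
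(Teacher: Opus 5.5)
\textbf{Part (a) has a genuine gap, at exactly the step you flag.} Everything depends on a degree-$2$ map $\mathcal X\to E_\mu$ whose branch points are precisely $E_\mu[8](k)$. The proposal only says that $w^2+w=B/A$ ``should be birational to $E_\mu$ after a change of coordinates.''

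The fallback to Lemma~\ref{lem:ASdescent} does not bypass this. The lemma concerns double covers of the given curve $E_\mu$ branched over $E_\mu[8](k)$, so you still need the identification and the location of the branch points. The lemma also needs a split lift of the translations, and this is not visible in \eqref{eq:GK62}. The evident automorphisms $z\mapsto z+c$, $c\in\F_4$, are just $u$ and lifts of $[-1]$. So you would be importing the MAGMA-computed group of \cite{GK2015}, and you would still owe an argument about its order-$8$ cyclic subgroup.

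The missing idea is the explicit identification, and it makes the lifting question unnecessary:
\begin{enumerate}[label=\textup{(\arabic*)}]
\item The duplication formula $x(2R)=(x^4+\mu)/x^2$ shows that your $A(x)$ equals $xP(x)^2$. Here $P=(x+\mu^{1/4})(x^2+\mu^{1/8}x+\mu^{1/2})$ collects the $x$-coordinates of the points of order $4$ and $8$. Also $B=Q^2$ is a square.
\item Put $M=\mu x^3+\mu^3x^2+\mu^2x+\mu^5$. The identity $M^2+xPM=(x^3+\mu)P^2+xQ^2$ shows that the substitution $y=xW+M/P$, with $W=z^2+z$, carries $y^2+xy=x^3+\mu$ to \eqref{eq:GK62}. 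Hence $W=f=y/x+M/(xP)$.
\item All residues of $f\,dx/x$ can then be checked directly. At the roots $a$ of $P$ use $M(a)=a^2P'(a)$; at $(0,\sqrt\mu)$ do a local computation; at $O$ use the residue theorem.
\end{enumerate}
No lift of the translations is needed.

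\textbf{In part (b) the Riemann--Hurwitz step is wrong as written: $u=\varphi_{1,1}$ has no fixed points.} It moves every affine point. It permutes the four branches at $X_\infty$ (tangent $Y=\nu Z\mapsto Y=(\nu+1)Z$) and the four at $Y_\infty$, fixing none of them.

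Eight fixed points would actually contradict your conclusion. Each would contribute at least $2$ to the different, so $16\ge 2(2g(\bar{\mathcal Y})-2)+16$, which forces $g(\bar{\mathcal Y})\le 1$. With $u$ acting freely one gets $16=2(2g(\bar{\mathcal Y})-2)$, hence genus $5$.

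Freeness is also what verifies (II) for $\bar S$, which you only assert. The eight branches at infinity form one $S$-orbit, and its image is a $\bar S$-orbit of size $4$. Affine stabilizers in $S$ have order at most $2$, so every other $\bar S$-orbit has at least $8$ points.

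The rest of your (b) is the paper's argument. The quotient by $\bar\varphi_{1,0}$ is $(s^2+s)(t^2+t)=1$; then (ia), (ii) and (iii) are excluded and Theorem~\ref{thm:converse} is applied. Proving the full isomorphism $\bar S\cong\Dih(C_4)\times C_2$ is more than you need. The elementary abelian subgroup $\{\varphi_{\alpha,\beta}\}/\langle u\rangle\cong C_2^3$ already rules out the dihedral case.
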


\begin{proof}
\textbf{(a)}  Throughout we use the following identities in $\F_{16}$:
\[
 \mu^4=\mu+1,\quad \mu^5=\mu^2+\mu,\quad \mu^8=\mu^2+1,\quad
 \mu^9=\mu^3+\mu,\quad \mu^{10}=\mu^2+\mu+1,
\]
\[
 \mu^{13}=\mu^3+\mu^2+1,\quad \mu^{14}=\mu^3+1,\quad \mu^{15}=1 .
\]
Since the Frobenius map $a\mapsto a^2$ is bijective on $\F_{16}$, we
have $\sqrt{\mu}=\mu^8$, $\mu^{1/4}=\mu^4$ and $\mu^{1/8}=\mu^2$.

\smallskip
\emph{Step 1: the $x$-coordinates of $E_\mu[8]$.}
Take $E_A:y^2+xy=x^3+A$ with $A\in k^\times$. The tangent at $R=(x,y)$
has slope $x+y/x$. Using $(y/x)^2+y/x=x+A/x^2$ we get
\[
 x(2R)=\frac{x^4+A}{x^2}.
\]
The nonzero $2$-torsion point is $T_2=(0,\sqrt A)$.

A point $R$ has order $4$ iff $x(2R)=0$, i.e.\ iff $x^4=A$, i.e.\ iff
$x=A^{1/4}$.

A point $R$ has order $8$ iff $x(2R)=A^{1/4}$, i.e.\ iff
\[
 x^4+A^{1/4}x^2+A=(x^2+A^{1/8}x+A^{1/2})^2=0 .
\]
Hence the $x$-coordinates of the six points of
$E_A[8](k)\setminus\{O,T_2\}$ are exactly the roots of
\[
 P_A(x)=(x+A^{1/4})(x^2+A^{1/8}x+A^{1/2}).
\]
These are three distinct nonzero roots. The quadratic factor is
separable, and substituting $x=A^{1/4}$ into it gives $A^{3/8}\neq0$.
Each root carries two points of $E_A$.

For $A=\mu$ this gives
\[
 P:=P_\mu=x^3+\mu^{10}x^2+\mu^{14}x+\mu^{12},
 \qquad
 P'=x^2+\mu^{14}.
\]
Squaring coefficient by coefficient,
\[
 xP^2=x^7+\mu^5x^5+\mu^{13}x^3+\mu^9x .
\]
This is the left-hand coefficient in \eqref{eq:GK62}.

\smallskip
\emph{Step 2: two polynomial identities.}  Put
\[
 Q=x^4+\mu x^3+\mu^4x^2+\mu^9x+\mu,
 \qquad
 M=\mu x^3+\mu^3x^2+\mu^2x+\mu^5 .
\]
Then $Q^2=x^8+\mu^2x^6+\mu^8x^4+\mu^3x^2+\mu^2$, which is the
right-hand side of \eqref{eq:GK62}. We claim
\begin{align}
 M^2+xPM&=(x^3+\mu)P^2+xQ^2, \label{eq:star}\\
 M+x^2P'&=(x+\mu^8)\,P. \label{eq:starstar}
\end{align}
Both sides of \eqref{eq:star} expand to
\[
 \mu x^7+\mu x^6+\mu^3x^5+\mu^6x^4+\mu x^3+\mu^{14}x^2+\mu^2x+\mu^{10}.
\]
The terms $x^9$ cancel on the right. Identity \eqref{eq:starstar} is a
direct expansion.

Two consequences will be used below. By \eqref{eq:starstar},
$M(a)=a^2P'(a)\neq0$ for every root $a$ of $P$. Moreover
$M(0)=\mu^5=\mu^8\cdot\mu^{12}=\sqrt\mu\,P(0)$.

\smallskip
\emph{Step 3: an Artin--Schreier function of the construction.}
On $E_\mu$ put
\[
 f=\frac yx+\frac{M(x)}{xP(x)},\qquad \omega=\frac{dx}{x}.
\]
We check the poles and residues of $f\omega$ at each point.

\emph{Points over a root $a$ of $P$.} Here $x-a$ is a local parameter,
and $y/x$ is regular. The term $M/(xP)$ has a simple pole, and
\[
 \Res(f\omega)=\frac{M(a)}{a^2P'(a)}=1 .
\]

\emph{The point $T_2=(0,\mu^8)$.} Put $s=y+\mu^8$. The equation becomes
$s^2=x(s+\mu^8+x^2)$. Hence $s$ is a local parameter,
$\operatorname{ord}(x)=2$, and
\[
 \omega=\frac{ds}{\mu^8+s+x^2},\qquad
 \frac1x=\frac{\mu^8+s+x^2}{s^2}.
\]
Since $M(0)/P(0)=\mu^8$, we can write
$y+M/P=s+x\,r(x)$ with $r$ regular at $x=0$. Therefore
\[
 f\omega=\frac{s+x\,r(x)}{s^2}\,ds
 =\Bigl(\frac1s+\frac{r(x)}{\mu^8+s+x^2}\Bigr)ds .
\]
So $f$ has a simple pole at $T_2$, and the residue is $1$.

\emph{The point $O$.} Here $y/x$ has a simple pole, and $M/(xP)$ is
regular because $\deg M=3<4$. The residue theorem then gives
$\Res_O(f\omega)=7\cdot1=1$.

\emph{All other points.} $f$ is regular there.

By Step~1, $f$ has simple poles exactly on $E_\mu[8](k)$, with all
residues equal to $1$ with respect to $dx/x$. By the independence of the
choice of $f$ (proof of Theorem~\ref{thm:main}), the cover $z^2+z=f$ is
$X(E_\mu,dx/x,1,8)$.

\smallskip
\emph{Step 4: the plane model.}  Put $W=z^2+z=f$. Then
\[
 y=xW+M/P .
\]
Substitute this into $y^2+xy=x^3+\mu$ and use \eqref{eq:star}:
\[
 x^2(W^2+W)=x^3+\mu+\frac{M^2+xPM}{P^2}=\frac{xQ^2}{P^2}.
\]
Since $W^2+W=z^4+z$, this is exactly
$xP^2(z^4+z)=Q^2$, i.e.\ \eqref{eq:GK62}.

Conversely, $y\in k(x,z)$, so $k(x,z)=k(E_\mu)(z)$. This field has
degree $4$ over $k(x)$. Hence \eqref{eq:GK62} is irreducible, and its
smooth model is $X(E_\mu,dx/x,1,8)$. Theorem~\ref{thm:main} now gives all remaining assertions.

\medskip
\textbf{(b)}

\emph{Genus and smoothness.} The affine part of $\mathcal Y$ is smooth.
Indeed, $\partial_x=y^4+y$ and $\partial_y=x^4+x$ vanish simultaneously
only where $(x^4+x)(y^4+y)=0\neq1$. At $X_\infty=(1:0:0)$ the tangent
cone is $\prod_{\nu\in\F_4}(Y+\nu Z)$, so $X_\infty$ is an ordinary
$4$-fold point. Its four branches correspond to the tangents $Y=\nu Z$.
The same holds at $Y_\infty$ with the tangents $X=\nu Z$. Hence
$g(\mathcal Y)=21-2\cdot6=9$.

\emph{The action on points.}
\begin{itemize}
\item $\varphi_{\alpha,\beta}$ moves every affine point unless
$\alpha=\beta=0$.
\item At $X_\infty$ it sends the branch with tangent $Y=\nu Z$ to the
branch with tangent $Y=(\nu+\beta)Z$.
\item At $Y_\infty$ it sends the branch with tangent $X=\nu Z$ to the
branch with tangent $X=(\nu+\alpha)Z$.
\item $\rho$ interchanges $X_\infty$ and $Y_\infty$.
\end{itemize}

\emph{$u$ acts freely.} In particular $u=\varphi_{1,1}$ has no fixed
point, and Riemann--Hurwitz gives $g(\bar{\mathcal Y})=5$.

\emph{Orbits of $S$ and of $\bar S$.} No nontrivial translation
$\varphi_{\alpha,\beta}$ fixes an affine point.  If two distinct elements
of the coset $S\setminus\{\varphi_{\alpha,\beta}:\alpha,\beta\in\F_4\}$
fixed the same affine point, their quotient would be a nontrivial
translation fixing that point.  Thus an affine stabilizer has order at
most $2$, and every $S$-orbit on affine points has at least $16$ elements.
The eight branches at infinity form a single $S$-orbit.  Since $u$ acts
freely, their images form a $\bar S$-orbit of size $4$, while every
other $\bar S$-orbit has size at least $8$.  In particular, every
$\bar S$-orbit has at least $4$ points.

\emph{Hypotheses (I) and (II).} From the orbit count, $\bar S$ satisfies
\textup{(II)}. It also satisfies \textup{(I)}, since
$|\bar S|=16\ge8$ and $16>2(5-1)$.

\emph{A non-inductive central involution.} Let $\psi$ be the image of
$\varphi_{0,1}$ in $\bar S$, which equals the image of
$\varphi_{1,0}=\varphi_{0,1}u$. It commutes with the image of
$\{\varphi_{\alpha,\beta}\}$. Since $\rho\varphi_{0,1}\rho=\varphi_{1,0}$,
the element $\psi$ is a central involution of $\bar S$.

Its quotient is $\bar{\mathcal Y}/\langle\psi\rangle=\mathcal Y/V$ with
$V=\{\varphi_{\alpha,\beta}:\alpha,\beta\in\F_2\}$, a group of order $4$.
The functions $s=x^2+x$ and $t=y^2+y$ are $V$-invariant and satisfy
$(s^2+s)(t^2+t)=1$. Moreover $[k(x,y):k(s,t)]\le4=|V|$, so
$k(s,t)=k(x,y)^V$.

The curve $(s^2+s)(t^2+t)=1$ of bidegree $(2,2)$ in
$\mathbf P^1\times\mathbf P^1$ is smooth. In the affine chart, the
partial derivatives are $t^2+t$ and $s^2+s$. In the chart $s=1/\sigma$
the partial derivative in $t$ at $\sigma=0$ equals $1$. The point
$(\infty,\infty)$ is not on the curve. Hence the curve has genus $1$,
and $\psi$ is not inductive.

\emph{Determining the case.} $\bar S$ contains the elementary abelian
group $\{\varphi_{\alpha,\beta}\}/\langle u\rangle\cong C_2^3$. A
dihedral group of order $16$ has no such subgroup, so $\bar S$ is not
dihedral. Also $|\bar S|=16\neq2g+2=12$. Because $\psi$ is not
inductive, case \textup{(iii)} of Theorem~\ref{thm:GKclassification} is
excluded. Case \textup{(ii)} is excluded by the order, and case
\textup{(ia)} by non-dihedrality. Hence $(\bar{\mathcal Y},\bar S)$ is
of type \textup{(ib)}.

\emph{Conclusion.} Theorem~\ref{thm:converse} with $h=2$ gives
$\bar{\mathcal Y}\cong X(E,\eta,1,4)$. The elliptic curve $E$ is the
quotient by the non-inductive central involution, namely $\psi$. Hence
$E$ is the curve $(s^2+s)(t^2+t)=1$.
\end{proof}

\begin{remark}\label{rem:GK-caveats}
We record four points about the published version of \cite{GK2015}.
\begin{enumerate}[label=\textup{(\arabic*)}]
\item The displayed equation in Section~6.2 lacks a ``$+$'' between
$YX^7$ and $\mu^5YX^5$.  Equation~\eqref{eq:GK62} is the evident reading.

\item Section~6.2 specifies only that $\mu$ is a primitive element of
$\F_{16}$.  In Proposition~\ref{prop:sporadic-GK}(a) we make the field
presentation explicit by imposing $\mu^4+\mu+1=0$.  The identification
proved there is for this convention; no assertion about a different
choice of primitive element is needed for the argument.

\item At the end of Section~6.2 it is stated that
$\bar{\mathcal X}=\mathcal X/C_2$ has genus $5$, has a dihedral subgroup
of order $8$, and is therefore of type \textup{(ib)}.  That inference
cannot be made from the displayed subgroup: a group of order $8$ on a
genus-$5$ curve does not satisfy the strict inequality in hypothesis
\textup{(I)}, and an acting group in case \textup{(ib)} would have order
$4(g-1)=16$.  The preceding MAGMA computation instead gives the genus-$9$
curve
$\mathcal X$ a subgroup $D_8\times C_2$ of order $32$, which is the
case-\textup{(ib)} curve identified in
Proposition~\ref{prop:sporadic-GK}(a).

\item The statement in Section~6.4 that no nontrivial element of $S$
fixes a point of $\mathcal Y$ is not correct.  The involution
$\rho:(x,y)\mapsto(y,x)$ fixes the four affine points $(a,a)$ with
$a^4+a+1=0$, and $\varphi_{\alpha,0}$ with $\alpha\ne0$ fixes the four
branches at $X_\infty$.  This does not affect the use made here of that
example.  In particular, \cite[Lemma~3.2]{GK2015} states that under
\textup{(I)} and \textup{(II)} the action has two short orbits; what is
required is that $S$ have no \emph{global} fixed point, not that every
nontrivial element act fixed-point-freely.  The orbit computation in the
proof of Proposition~\ref{prop:sporadic-GK}(b) verifies hypothesis
\textup{(II)} directly for both $S$ and $\bar S$.
\end{enumerate}
\end{remark}

\subsection{The parameter space}

\begin{theorem}\label{thm:parameter-space}
Fix $q\ge8$.  For
\[
        E_A:\qquad y^2+xy=x^3+A,\qquad A\in k^\times,
\]
the assignment
\[
        (A,c)\longmapsto X(E_A,c\,dx/x,1,q)
\]
induces a bijection
\[
        (k^\times)^2
        \;\xrightarrow{\ \sim\ }\;
        \{\text{isomorphism classes of type-\textup{(ib)} curves of genus }q+1\}.
\]
\end{theorem}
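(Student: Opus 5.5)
By Theorem~\ref{thm:main} and Theorem~\ref{thm:converse}, the type-(ib) curves of genus $q+1$ with $q\ge8$ are exactly the curves $X(E,\eta,1,q)$. By Proposition~\ref{prop:parameters} and Corollary~\ref{cor:intrinsic-parameter}, their isomorphism classes correspond bijectively to isomorphism classes of pairs $(E,\eta)$. Here $E$ is an ordinary elliptic curve, $0\ne\eta\in H^0(E,\Omega^1_E)$, and isomorphisms of pairs are origin-preserving isomorphisms $\varphi$ with $\varphi^*\eta_2=\eta_1$. The proof therefore reduces to a statement about pairs: the map $(A,c)\mapsto(E_A,c\,dx/x)$ should be a bijection from $(k^\times)^2$ onto isomorphism classes of pairs $(E,\eta)$.

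\emph{Surjectivity.} First I will check that $E_A$ is nonsingular and ordinary for every $A\ne0$. Its discriminant is $A\neq0$, and $j(E_A)=1/A\ne0$, which in characteristic $2$ is equivalent to ordinarity. Every ordinary $E$ has $j(E)\ne0$. Over an algebraically closed field, two curves with the same $j$-invariant are isomorphic, so $E\cong E_{1/j(E)}$. Next, $dx/x$ is a nonzero invariant differential on $E_A$. The standard invariant differential is $dx/(2y+a_1x+a_3)=dx/x$, since $a_1=1$ and $a_3=0$. Because $H^0(E,\Omega^1)$ is one-dimensional, every $\eta$ transported to $E_A$ has the form $c\,dx/x$ with $c\in k^\times$. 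This proves surjectivity.

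\emph{Injectivity.} Suppose $(E_A,c\,dx/x)\cong(E_{A'},c'\,dx/x)$. Comparing $j$-invariants gives $1/A=1/A'$, so $A=A'$. The isomorphism is then an automorphism $\varphi\in\Aut(E_A,0)$. Since $j\ne0$, the paper's earlier citation of \cite[Theorem~III.10.1]{Silverman} gives $\Aut(E_A,0)=\{1,[-1]\}$. Both elements fix $dx/x$, because $[-1]^*\omega=-\omega=\omega$ in characteristic $2$. Hence $c'\,dx/x=\varphi^*(c'\,dx/x)=c\,dx/x$, so $c=c'$.

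The main point needing care is the automorphism group, and the relevant fact is already recorded in the paper. It is also worth checking that the $j$-invariant formula for $y^2+xy=x^3+A$ really is $1/A$. For $a_1=1$ and all other coefficients zero except $a_6=A$, one has $b_2=1$, $b_4=b_6=0$, $b_8=A$, $\Delta=A$ and $c_4=1$, so $j=c_4^3/\Delta=1/A$. The bijection is well defined on the target side because each $X(E_A,c\,dx/x,1,q)$ is of type (ib) by Theorem~\ref{thm:main}(iv).
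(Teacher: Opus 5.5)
Your proposal is correct and follows essentially the same route as the paper. You reduce to isomorphism classes of pairs $(E,\eta)$ via Theorem~\ref{thm:converse} and Proposition~\ref{prop:parameters}, show every pair is represented by some $(E_A,c\,dx/x)$, and use $\Aut(E_A,0)=\{1,[-1]\}$ fixing $dx/x$ for injectivity. The only minor difference is that you get surjectivity from ``same $j$-invariant implies isomorphic'' with $j(E_A)=1/A$, where the paper carries out the explicit Weierstrass normalization $y\mapsto y+rx$.
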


\begin{proof}
By Proposition~\ref{prop:parameters} and Theorem~\ref{thm:converse}, no
additional parameter occurs.  We recall briefly the normal form for an
ordinary elliptic curve in characteristic $2$.  Since $j(E)\ne0$, a
Weierstrass equation may be normalized to
\[
        y^2+xy=x^3+a_2x^2+a_6,\qquad a_6\ne0.
\]
Replacing $y$ by $y+rx$, where $r^2+r=a_2$ (possible because $k$ is
algebraically closed), removes the $x^2$-term.  Thus
\[
        E\cong E_A:\quad y^2+xy=x^3+A,
        \qquad A=a_6\in k^\times,
\]
and for this model $j(E_A)=A^{-1}$.  Hence $A$ is determined by the
$j$-invariant.  Since $H^0(E_A,\Omega^1_{E_A})$ is one-dimensional and
$dx/x$ is a nonzero invariant differential, every pair $(E,\eta)$ is
represented by
\[
        (E_A,c\,dx/x),\qquad A,c\in k^\times.
\]
For fixed $A$, Theorem~III.10.1 of \cite{Silverman} gives
$\Aut(E_A,0)=\{1,[-1]\}$, and in characteristic $2$ both automorphisms
fix $dx/x$.  Hence two pairs $(E_A,c\,dx/x)$ and
$(E_B,d\,dx/x)$ are isomorphic if and only if $A=B$ and $c=d$.
The assertion follows.
\end{proof}

\begin{remark}
For $q=4$, Theorem~\ref{thm:converse} still gives surjectivity of the
same construction onto the type-\textup{(ib)} curves of genus $5$.
However, Proposition~\ref{prop:parameters} is stated only for $q\ge8$,
because its proof uses uniqueness of the bielliptic involution.  We
therefore do not claim that the parameters $(A,c)$ are unique in genus
$5$.
\end{remark}

\subsection{The unramified cyclic tower}

\begin{proposition}\label{prop:tower}
Let $C=\langle\widetilde\tau\rangle\cong C_q$.  For
$0\le r\le h$, let $C_r\le C$ be the unique subgroup of order $2^r$ and
put
\[
 X_r=X/C_r.
\]
Let $K_r\le T=E[2^h](k)$ be the image of $C_r$ under the projection
$C\to T$, and put
\[
 E_r=E/K_r,\qquad q_r=2^{h-r}.
\]
Then:
\begin{enumerate}[label=\textup{(\roman*)}]
\item $C_r$ acts freely on $X$;
\item $X\to X_r$ is an unramified cyclic cover of degree $2^r$;
\item $X_r$ is ordinary and
\[
 g(X_r)=1+q_r=1+2^{h-r};
\]
\item in particular, $X/C=X_h$ is an ordinary curve of genus $2$;
\item for $0\le r\le h-1$, the quotient by the image of the deck
involution is
\[
 X_r/\langle\bar u\rangle\cong E_r,
\]
and this Artin--Schreier double cover is ramified exactly over
\[
 E_r[2^{h-r}](k),
\]
with lower jump $1$ at every branch point;
\item for $0\le r\le h-1$, choose the unique invariant differential
$\omega_r$ on $E_r$ satisfying $\phi_r^*\omega_r=\omega$, where
$\phi_r:E\to E_r$ is the quotient isogeny.  Then $X_r$ is obtained by the
same residue construction with data
\[
 (E_r,\omega_r,\lambda,q_r).
\]
In particular, if $0\le r\le h-3$, then
\[
 \Aut(X_r)\cong \Dih(C_{q_r})\times C_2,
 \qquad |\Aut(X_r)|=4q_r=4(g(X_r)-1),
\]
and $X_r$ is again of type \textup{(ib)}.
\end{enumerate}
Thus the unramified quotient tower remains inside the same non-dihedral
extremal family from genus $2^h+1$ down to genus $9$.
\end{proposition}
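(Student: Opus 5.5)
Parts (i)--(iv) follow from the freeness of $C$ established in the proof of Theorem~\ref{thm:main}. Every nontrivial element of $C$ projects to a nontrivial translation of $E$, so $C_r\le C$ acts freely. Hence $X\to X_r$ is an unramified cyclic cover of degree $2^r$. Riemann--Hurwitz gives $2q=2^r(2g(X_r)-2)$, so $g(X_r)=1+2^{h-r}$. The Deuring--Shafarevich formula \eqref{eq:DS} for a free action gives $\gamma(X)-1=2^r(\gamma(X_r)-1)$, so $\gamma(X_r)=g(X_r)$ and $X_r$ is ordinary. (Lemma~\ref{lem:ordinary-weak} gives ordinarity directly as well.) Taking $r=h$ recovers (iv), already proved as part (v) of Theorem~\ref{thm:main}.

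For (v), I use that $u$ commutes with $C$ and $C\cap\langle u\rangle=1$. Then $\bar u$ is a nontrivial involution of $X_r$, and
\[
        X_r/\langle\bar u\rangle=X/(C_r\times\langle u\rangle)=E/K_r=E_r .
\]
The projection $C\to T$ is an isomorphism, so $K_r$ has order $2^r$, and $\phi_r:E\to E_r$ is a separable, hence étale, isogeny. The square $X\to X_r$ over $E\to E_r$ is exactly the situation in the proof of the Artin--Schreier descent lemma: $k(X)=k(X_r)k(E)$ over $k(E_r)$, and $X$ is the normalization of the pullback of $X_r\to E_r$ along $\phi_r$. Because $\phi_r$ is étale, the branch locus of $X_r\to E_r$ is $\phi_r(T)=T/K_r$, and lower jumps are unchanged. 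It remains to identify $T/K_r$ with $E_r[2^{h-r}](k)$. It is a cyclic group of order $2^{h-r}$ killed by $2^{h-r}$, and an ordinary $E_r$ has exactly $2^{h-r}$ such geometric points. Hence the branch locus is exactly $E_r[2^{h-r}](k)$, with lower jump $1$.

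For (vi), I compare $X_r$ with $X':=X(E_r,\omega_r,\lambda,q_r)$. The natural route is to push down the Artin--Schreier function. I would take $f_r$ for the data $(E_r,\omega_r,\lambda,q_r)$ and compare $\phi_r^*f_r$ with $f$. Since $\phi_r$ is étale and $\phi_r^*\omega_r=\omega$, the pullback $\phi_r^*f_r$ has simple poles exactly on $\phi_r^{-1}(E_r[q_r])=T$, with residues $\Res_P(\phi_r^*f_r\,\omega)=\Res_{\phi_r(P)}(f_r\omega_r)=\lambda$. By Lemma~\ref{lem:principalparts}, $\phi_r^*f_r-f$ is constant, so after a shift $z\mapsto z+e$ the pullback of $X'$ along $\phi_r$ is $X$.

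The main obstacle is descending this to an isomorphism $X_r\cong X'$ over $E_r$. Both curves pull back to $X$, but their Artin--Schreier classes on $E_r$ could differ by the kernel of $\phi_r^*$ on $k(E_r)/\wp(k(E_r))$. That kernel is the unramified class of $E_r$ killed by the étale isogeny, and it corresponds to replacing the lift of $K_r$ by a non-split one, as in Remark~\ref{rem:descent-sharp}. I would resolve this through the group structure. In both $X\to X_r$ and $X\to X'$, the deck group is a lift of $K_r$ that commutes with $u$ and meets $\langle u\rangle$ trivially, and it is cyclic of order $2^r$. I would argue that such a lift is unique: it is the unique subgroup of order $2^r$ in $C$, equivalently the one compatible with the filtration of $C$. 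For $X'$, this follows by applying the construction to $\phi_r^*f_r$ and checking that the translations by $K_r$ lift to $\langle\widetilde\tau\rangle$. Since $k(X_r)=k(X)^{C_r}=k(X')$, this gives $X_r\cong X'$. Alternatively, I would apply the descent lemma directly to $X_r\to E_r$ with $\widetilde T=C/C_r$: it commutes with $\bar u$, meets $\langle\bar u\rangle$ trivially because $C\cap C_r\langle u\rangle=C_r$, and lifts $E_r[q_r]$. Then I would pin down $\lambda$ via the residue computation above.

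The final claims follow from Theorem~\ref{thm:main} and Proposition~\ref{prop:fullaut} applied with $q_r\ge8$, that is, $r\le h-3$.
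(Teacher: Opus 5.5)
Your proofs of (i)--(v) agree with the paper's. Your ``alternative'' route for (vi) is exactly the paper's argument, so the proposal is correct along that route. That route applies the Artin--Schreier descent lemma to $X_r\to E_r$ with $\widetilde T=C/C_r$. It then fixes $\lambda_r=\lambda$ by pulling residues back along the \'etale isogeny $\phi_r$, using $\phi_r^*\omega_r=\omega$.

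The reason you give for your first route is false, however. A lift of $K_r$ that is cyclic of order $2^r$, commutes with $u$ and meets $\langle u\rangle$ trivially is \emph{not} unique when $r\ge1$. The automorphisms of $X$ lying over translations by $K_r$ form $C_r\times\langle u\rangle$. In that group, $\langle u\rangle$ has two complements: $C_r$ and $\langle\widetilde\tau^{q/2^r}u\rangle$. For $r=1$ the second one is $\langle au\rangle$. By Theorem~\ref{thm:centralquotients} (for $h\ge4$), $X/\langle au\rangle=X^+$ is dihedral of type \textup{(ia)}, so it is not $X_1$. This is precisely the \'etale twist you identified as the obstruction.

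So that route depends on the explicit check you mention only in passing: after identifying the cover $z^2+z=\phi_r^*f_r$ with $X$, its lift of $K_r$ must lie in $C$. That check is easy, and it in fact yields (vi) without the descent lemma:
\begin{enumerate}[label=\textup{(\alph*)}]
\item From $\tau_{P_0}^*f=f+c$ one gets $\tau_{jP_0}^*f=f+jc$.
\item For $r\le h-1$ one has $K_r\subseteq 2T$. Hence $f$ is exactly $K_r$-invariant, and $\widetilde\tau^{q/2^r}$ acts by $(R,z)\mapsto(R+(q/2^r)P_0,z)$, since $q/2^r$ is even.
\item Therefore $f=\phi_r^*f_r$ for some $f_r\in k(E_r)$, and a degree count gives $k(X)^{C_r}=k(E_r)(z)$ with $z^2+z=f_r$.
\item Since $\phi_r$ is \'etale with $\phi_r^*\omega_r=\omega$, $f_r$ has simple poles exactly on $E_r[q_r](k)$, with all residues equal to $\lambda$.
\end{enumerate}
This direct descent of $f$ is more elementary than the paper's appeal to the descent lemma, and it gives $\lambda_r=\lambda$ for free.
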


\begin{proof}
Every nontrivial element of $C$ projects to a nontrivial translation of
$E$, hence has no fixed point on $X$.  Thus every $C_r$ acts freely, and
$X\to X_r$ is unramified.  Riemann--Hurwitz yields
\[
 2g(X)-2=2^r\bigl(2g(X_r)-2\bigr),
\]
so
\[
 g(X_r)-1=2^{h-r}=q_r.
\]
For the $2$-rank, the Deuring--Shafarevich formula for the free
$C_r$-action gives
\[
 \gamma(X)-1=2^r\bigl(\gamma(X_r)-1\bigr).
\]
Since $\gamma(X)-1=2^h$, we obtain
\[
 \gamma(X_r)=1+2^{h-r}=g(X_r),
\]
and hence every $X_r$ is ordinary.

Because $C_r$ commutes with $u$, the involution $u$ descends to $X_r$ and
\[
 X_r/\langle\bar u\rangle
 \cong X/\langle C_r,u\rangle
 \cong E/K_r=E_r.
\]
The isogeny $\phi_r:E\to E_r$ is the quotient by a free translation
subgroup, hence is finite \emph{\'etale}.  Moreover,
$C_r\cap\langle u\rangle=1$, so $X$ is the normalization of the fibre
product
\[
 X_r\times_{E_r}E.
\]
Consequently the completed local extension for $X_r\to E_r$ becomes the
completed local extension for $X\to E$ after the unramified base change
$E\to E_r$.  Ramification groups and lower jumps are therefore unchanged.
The branch locus is the image of $T$ in $E_r$, a cyclic group of order
$q_r$.  The elliptic curve $E_r$ is ordinary, being isogenous to $E$, and
therefore this image is precisely $E_r[2^{h-r}](k)$.  Thus every branch
point has lower jump $1$.

Assume now $r\le h-1$, so $q_r$ is even.  Since $\phi_r$ is separable,
pullback on invariant differentials is an isomorphism; hence there is a
unique nonzero invariant differential $\omega_r$ on $E_r$ such that
\[
        \phi_r^*\omega_r=\omega.
\]
The cyclic group $C/C_r\cong C_{q_r}$ acts on $X_r$, commutes with the
image of the deck involution, has trivial intersection with it, and maps
isomorphically onto the translation subgroup $E_r[q_r](k)$.  Together
with the branch and jump computation above, Lemma~\ref{lem:ASdescent} applies and yields a unique $\lambda_r\in k^\times$ such that
\[
        X_r\cong X(E_r,\omega_r,\lambda_r,q_r).
\]

It remains to identify $\lambda_r$.  The curve $X$ is the normalization of
$X_r\times_{E_r}E$, and the map $\phi_r:E\to E_r$ is \emph{\'etale}.
Let $P\in E$ lie above $P_r\in E_r$.  For a local Artin--Schreier
representative $f_r$ of $X_r/E_r$, the map on completed local fields
induced by the finite \emph{\'etale} morphism $\phi_r$ has ramification
index $1$.  Hence pullback preserves residues, and
\[
 \Res_P\bigl(\phi_r^*(f_r\omega_r)\bigr)
   =\Res_{P_r}(f_r\omega_r).
\]  Since
$\phi_r^*\omega_r=\omega$ and the pullback cover is $X/E$, the left-hand
side equals $\lambda$.  Hence
\[
        \lambda_r=\lambda.
\]
Thus
\[
        X_r\cong X(E_r,\omega_r,\lambda,q_r).
\]

If $r\le h-3$, then $q_r\ge8$, so Theorem~\ref{thm:main} applies to this
new set of data.  This gives
\[
 \Aut(X_r)\cong\Dih(C_{q_r})\times C_2
\]
and shows that $X_r$ is again a non-dihedral Nakajima extremal curve of
type \textup{(ib)}.
\end{proof}

\section{Central involutions and their quotients}\label{sec:KR}

The quotients by the central involutions make the inductive structure of
case \textup{(ib)} explicit and connect it with the dihedral case.

\subsection{The centre}

\begin{corollary}\label{cor:center}
For $q\ge8$,
\[
 Z(\Aut(X))=\langle\widetilde\tau^{q/2},u\rangle\cong C_2\times C_2.
\]
Among the three nontrivial central involutions, $u$ is the unique one
whose quotient has genus $1$.  The other two act freely, and each has an
ordinary quotient of genus $q/2+1$.
\end{corollary}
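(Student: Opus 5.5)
The plan is to use Proposition~\ref{prop:fullaut}, which gives $\Aut(X)=S\cong\Dih(C_q)\times C_2$ for $q\ge8$, and to read off the centre from this decomposition. The centre of a direct product is the product of the centres. For $q\ge4$ the centre of $\Dih(C_q)=\langle\widetilde\tau,w\rangle$ is $\langle\widetilde\tau^{q/2}\rangle$: a rotation $\widetilde\tau^i$ is central only if $\widetilde\tau^{-i}=\widetilde\tau^i$, and no reflection is central. Hence
\[
 Z(\Aut(X))=\langle\widetilde\tau^{q/2}\rangle\times\langle u\rangle\cong C_2\times C_2 .
\]
The three nontrivial central involutions are therefore $a:=\widetilde\tau^{q/2}$, $u$ and $au$.

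Next I would show that $a$ and $au$ act freely. The element $a$ lies in $C=\langle\widetilde\tau\rangle$, which acts freely by Theorem~\ref{thm:main}(v); equivalently, $a$ projects to translation by the nonzero point of $E[2](k)$. The element $au$ also projects to that translation, because $u$ lies in the kernel of $X\to E$. Since a translation by a nonzero point has no fixed point on $E$, neither $a$ nor $au$ has a fixed point on $X$.

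For the quotient genera, Riemann--Hurwitz for an unramified double cover gives $2q=2(2g'-2)$, so $g'=q/2+1$. The Deuring--Shafarevich formula \eqref{eq:DS} with no short orbits gives $\gamma(X)-1=2(\gamma'-1)$, so $\gamma'=q/2+1=g'$ and both quotients are ordinary. Alternatively, for $a$ this is the case $r=1$ of Proposition~\ref{prop:tower}.

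For the quotient by $u$ we have $X/\langle u\rangle=E$, which has genus $1$, while the other two quotients have genus $q/2+1\ge5$. Hence $u$ is the unique central involution whose quotient has genus $1$.

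The only step that needs care is the centre computation: I must make sure that no element of the form $\widetilde\tau^i w$ commutes with $\widetilde\tau$, since $w\widetilde\tau^i w^{-1}=\widetilde\tau^{-i}$ and $q\ge4$. The rest is immediate bookkeeping.
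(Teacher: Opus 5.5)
Your proposal is correct and follows the paper's proof essentially step for step. You read off the centre from $\Aut(X)\cong\Dih(C_q)\times C_2$ (Proposition~\ref{prop:fullaut}), you get freeness of $\widetilde\tau^{q/2}$ and $u\widetilde\tau^{q/2}$ from the fact that both project to translation by the nonzero $2$-torsion point of $E$, and you obtain genus and $2$-rank $q/2+1$ from Riemann--Hurwitz and Deuring--Shafarevich. Your explicit checks that no reflection is central and that $q/2+1\ne1$ (which gives the uniqueness of $u$) fill in details the paper leaves implicit.
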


\begin{proof}
The center of $\Dih(C_q)$ is generated by the unique element of order
$2$ in $C_q$, namely $\widetilde\tau^{q/2}$.  The description of the
center follows from Proposition~\ref{prop:fullaut}.  The involution $u$ has quotient
$E$ of genus one.  Both $\widetilde\tau^{q/2}$ and
$u\widetilde\tau^{q/2}$ project to the nonzero $2$-torsion translation
on $E$, so they have no fixed point.  Riemann--Hurwitz and
Deuring--Shafarevich for either free involution give genus and $2$-rank
$q/2+1$ for the quotient.
\end{proof}

\begin{theorem}\label{thm:centralquotients}
Assume $q=2^h$ with $h\ge4$, put
\[
 t=\widetilde\tau,\qquad a=t^{q/2},\qquad
 V=\langle a,u\rangle=Z(\Aut(X)),
\]
and set
\[
 X^-:=X/\langle a\rangle,\qquad
 X^+:=X/\langle au\rangle,\qquad
 E_1:=X/V.
\]
Then:
\begin{enumerate}[label=\textup{(\roman*)}]
\item $E_1$ is an ordinary elliptic curve and the natural map
      $E=X/\langle u\rangle\to E_1$ is a separable isogeny of degree $2$;
\item $X^-$ is an ordinary Nakajima extremal curve of genus
\[
 g(X^-)=\frac q2+1
\]
whose full automorphism group is
\[
 \Aut(X^-)\cong \Dih(C_{q/2})\times C_2.
\]
Moreover, for any nonzero invariant differential $\omega_1$ on $E_1$,
there is a $\lambda_1\in k^\times$ such that
\[
 X^-\cong X(E_1,\omega_1,\lambda_1,q/2).
\]
In particular $X^-$ is again a member of the constructed family and is of
type \textup{(ib)};
\item $X^+$ is an ordinary Nakajima extremal curve of the same genus, but
\[
 \Aut(X^+)\cong \Dih(C_q);
\]
in particular $X^+$ is of the dihedral type \textup{(ia)};
\item relative to the pair $(X,G)$, the involutions $a$ and $au$ are
      exactly the two central inductive involutions in the sense of
      \cite{GK2015}; the bielliptic involution $u$ is not inductive.
\end{enumerate}
\end{theorem}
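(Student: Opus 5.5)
We treat the four parts in order, working throughout with the three intermediate curves $X^-$, $X^+$, $E_1$. For (i), $E_1=X/V=(X/\langle u\rangle)/\langle\bar a\rangle=E/\langle \tau_{P_0}^{q/2}\rangle$. Here $\bar a$ is translation by the nonzero $2$-torsion point $P_0^{q/2}=q/2\cdot P_0$. So $E\to E_1$ is the separable quotient isogeny by a reduced subgroup of order $2$. Consequently $E_1$ is elliptic and, being isogenous to $E$, ordinary.

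For (ii), we apply Proposition~\ref{prop:tower} with $r=1$: $C_1=\langle a\rangle$ and $X^-=X_1$. That proposition already gives that $X^-$ is ordinary of genus $q/2+1$ and that $X^-\cong X(E_1,\omega_1,\lambda,q/2)$ with $\phi_1^*\omega_1=\omega$. For an arbitrary nonzero $\omega_1'=c\,\omega_1$ we simply rescale, using $\lambda_1=c\lambda$. Since $h\ge4$ gives $q/2\ge8$, Theorem~\ref{thm:main} yields $\Aut(X^-)\cong\Dih(C_{q/2})\times C_2$ and type (ib). Nakajima extremality follows from $|\Aut|=2q=4(g(X^-)-1)$.

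For (iii), $au$ is free by Corollary~\ref{cor:center}, so $X^+$ is ordinary of genus $q/2+1$. The image of $u$ in $\Aut(X^+)$ is an involution $\bar u$ with $X^+/\langle\bar u\rangle=E_1$, and $\bar u$ is still bielliptic. Since $g(X^+)\ge9$, Castelnuovo--Severi makes $\bar u$ the unique bielliptic involution, hence central. As in Proposition~\ref{prop:fullaut}, we get $\Aut(X^+)/\langle\bar u\rangle\hookrightarrow\Aut(E_1,B_1)\cong\Dih(C_{q/2})$, where $B_1=E_1[q/2](k)$ is the branch locus (image of $T$). So $|\Aut(X^+)|\le 2q$. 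On the other hand, $G/\langle au\rangle$ has order $2q$ and acts faithfully, so $\Aut(X^+)=G/\langle au\rangle$. Its structure: the image $\bar t$ of $t$ satisfies $\bar t^{q/2}=\bar a=\bar u$, so $\langle \bar t\rangle$ is cyclic of order $q$ containing $\bar u$. The image $\bar w$ inverts $\bar t$, hence $G/\langle au\rangle\cong\Dih(C_q)$. This is the non-split pattern of Remark~\ref{rem:descent-sharp}. Because the group is dihedral, the pair cannot be in case (ib) or (ii). Hypotheses (I) and (II) hold: the order is $2q>2(g-1)=q$, and there is no global fixed point, since the images of $\bar t^i$, $0<i<q/2$, act as nontrivial translations of $E_1$. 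Thus the pair is in case (ia) if that case is the one realized. The main obstacle is excluding (iii): we must show $\bar u$, the unique central involution of $\Dih(C_q)$, is not inductive, which is clear because $X^+/\langle\bar u\rangle=E_1$ has genus $1$. Hence the pair is of type (ia).

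For (iv), $V=Z(G)$ gives exactly three central involutions. The involution $u$ is not inductive, since $g(E)=1$. The involution $a$ is inductive: $X^-$ has genus $q/2+1\ge2$, the quotient group $G/\langle a\rangle$ of order $2q$ satisfies (I) because $2q\ge8$ and $2q>q$, and it satisfies (II) since its image in $\Aut(E_1,B_1)$ contains translations that fix no point. The same argument applies to $au$ with $X^+$. This also matches Proposition~\ref{prop:GK-refinement}.
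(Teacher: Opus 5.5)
Your proposal is correct and takes essentially the same route as the paper: for $X^-$, Proposition~\ref{prop:tower} with $r=1$, then rescaling $\lambda_1=c\lambda$ and Theorem~\ref{thm:main}; for $X^+$, the unique bielliptic involution $\bar u$, descent into $\Aut(E_1,B_1)$ of order $q$, the dihedral quotient $G/\langle au\rangle$, and exclusion of cases (ib), (ii) and (iii). The one step you assert without proof is that the branch locus of $X^+\to E_1$ is the image of $T$. The paper justifies this by observing that $\bar u$ fixes the image of $P\in X$ only if $uP=P$ or $aP=P$, and the second alternative is impossible because $a$ acts freely.
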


\begin{proof}
By Corollary~\ref{cor:center}, both $a$ and $au$ act freely on $X$.  Hence
Riemann--Hurwitz and Deuring--Shafarevich give
\[
 g(X^-)=g(X^+)=\frac q2+1,
 \qquad
 \gamma(X^-)=\gamma(X^+)=\frac q2+1.
\]
Thus both quotients are ordinary.  Moreover,
\[
 E_1=X/\langle a,u\rangle
     \cong E/\langle \bar a\rangle,
\]
where $\bar a$ is translation by the unique nonzero $2$-torsion point of
$E$.  The map $E\to E_1$ is therefore the separable degree-$2$ isogeny
with kernel $\langle\bar a\rangle$, and $E_1$ is ordinary.  Put
\[
        T_1:=E_1[2^{h-1}](k).
\]

Consider first $X^-$.  The group $G/\langle a\rangle$ acts faithfully on
$X^-$ and
\[
 G/\langle a\rangle
 \cong \Dih(C_{q/2})\times C_2,
\]
of order $2q=4(g(X^-)-1)$.  The image of $u$ is central and
\[
 X^-/\langle\bar u\rangle=E_1.
\]
The subgroup $\langle a\rangle$ is the unique subgroup of order $2$ in
$C=\langle t\rangle$.  Thus $X^-=X/\langle a\rangle$ is precisely the
case $r=1$ of Proposition~\ref{prop:tower}, and $E_1=E/\langle\bar a\rangle$
is the corresponding elliptic quotient.  Let $\omega_1^0$ be the unique
invariant differential on $E_1$ satisfying
\[
        (E\to E_1)^*\omega_1^0=\omega.
\]
Proposition~\ref{prop:tower}(vi) gives directly
\[
        X^-\cong X(E_1,\omega_1^0,\lambda,q/2).
\]
If $\omega_1$ is any other nonzero invariant differential on $E_1$, write
$\omega_1=c\omega_1^0$ with $c\in k^\times$.  The defining residue
condition then gives
\[
        X(E_1,\omega_1^0,\lambda,q/2)
        \cong X(E_1,\omega_1,c\lambda,q/2).
\]
Thus the assertion in part \textup{(ii)} holds with $\lambda_1=c\lambda$.

Since $h\ge4$, one has $q/2=2^{h-1}\ge8$.  The displayed
identification realizes $X^-$ as a curve of Theorem~\ref{thm:main} with
parameter $q/2$.  Thus Theorem~\ref{thm:main}(iii)--(iv) gives directly
\[
 \Aut(X^-)\cong\Dih(C_{q/2})\times C_2,
\]
and shows that $X^-$ is of type \textup{(ib)}.

Now put $H=\langle au\rangle$.  In $G/H$ the images of $a$ and $u$
coincide, while the image of $t$ still has order $q$.  Hence
\[
 G/H\cong\Dih(C_q),
\]
of order $2q=4(g(X^+)-1)$.  The common image $\bar a=\bar u$ is a
bielliptic involution and
\[
 X^+/\langle\bar u\rangle=E_1.
\]
We claim that the branch locus is again exactly $T_1$.  A point
$\overline P\in X^+=X/H$ is fixed by the common image
$\bar u=\bar a$ precisely when, for a lift $P\in X$, one has either
$uP=P$ or $aP=P$: indeed $uP$ must lie in the $H$-orbit of $P$, and
$H=\{1,au\}$.  The second alternative is impossible because $a$ acts
freely.  Thus the fixed points of $\bar u$ are precisely the images of the
$u$-fixed points on $X$.  Their images in $E_1$ are the image of $T$ under
the degree-$2$ isogeny $E\to E_1$, which is precisely $T_1$.
Hence this is exactly the branch locus of $X^+\to E_1$.

Since $h\ge4$, one has $g(X^+)=q/2+1\ge9$, so the bielliptic involution
of $X^+$ is unique by Castelnuovo--Severi.  By uniqueness of the
bielliptic involution, every automorphism descends to
$(E_1,T_1)$.  Here $|T_1|=q/2$, and, as in
Proposition~\ref{prop:fullaut},
\[
        \Aut(E_1,T_1)=T_1\rtimes\langle[-1]\rangle
\]
has order $q$.  Thus $|\Aut(X^+)|\le2q$, and the dihedral subgroup
$G/H$ already has this order.  Therefore
\[
 \Aut(X^+)\cong\Dih(C_q).
\]
The image of $t$ on $E_1$ is a nontrivial translation, so neither of the
two displayed extremal groups has a global fixed point.

For $X^+$ the acting group is dihedral of order
$4(g(X^+)-1)=2q$.  Case \textup{(ii)} would require
$2q=2g(X^+)+2=q+4$, hence $q=4$, whereas here $h\ge4$.  Moreover
$Z(\Dih(C_q))=\langle\bar u\rangle$, and
$X^+/\langle\bar u\rangle=E_1$ has genus $1$.  Thus $\bar u$ is
non-inductive, so case \textup{(iii)} is also excluded.  The
classification therefore places $X^+$ in case \textup{(ia)}.
Moreover,
\[
 |G/\langle a\rangle|
 =|G/\langle au\rangle|
 =2q
 =4\left(\frac q2+1-1\right),
\]
and both induced groups have order at least $8$ and no global fixed point.
Thus $a$ and $au$ are inductive central involutions.  On the other hand,
$X/\langle u\rangle=E$ has genus one, so $u$ is not inductive.  Since
$Z(G)=\langle a,u\rangle$ has exactly three nontrivial elements, $a$ and
$au$ are precisely the two central inductive involutions.

\end{proof}

\begin{remark}\label{rem:central-h3}
When $h=3$ (so $q=8$), the quotient $X^-=X/\langle a\rangle$ has genus
$5$.  Proposition~\ref{prop:tower}(vi) identifies it with a curve of the
form $X(E_1,\omega_1,\lambda_1,4)$, and Remark~\ref{rem:smallq} shows
that it is still of type \textup{(ib)}.  In this genus we do not claim a
determination of the full automorphism group.
\end{remark}

\section{Explicit equations and a logarithmic-differential realization}\label{sec:explicit}

The construction admits the following explicit form.  The proposition is stated for residue $1$; multiplication by $\lambda\in k^\times$ gives the general case.

\begin{proposition}\label{prop:logdiff}
Let $q=2^h$ and let $E/k$ be an ordinary elliptic curve.  Choose
$R\in E(k)$ of order $2q$ and put
\[
 T=\langle 2R\rangle.
\]
Then there is $H\in k(E)^\times$ with
\[
 \operatorname{div}(H)=\sum_{P\in T}[P]-q[R].
\]
If $\omega$ is a nonzero invariant differential, then
\[
 f=\frac{dH/H}{\omega}
\]
has a simple pole at every point of $T$ and no other pole, and
\[
 \Res_P(f\omega)=1\qquad(P\in T).
\]
\end{proposition}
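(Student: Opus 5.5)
The plan has three parts: first show $H$ exists via Abel's theorem, then compute residues of the logarithmic differential, then deal with the pole at $R$.

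\emph{Existence of $H$.} By Abel's theorem on the genus-one curve $E$, a degree-zero divisor $\sum n_P[P]$ is principal exactly when $\sum n_P P=0$ in $E(k)$. The divisor $\sum_{P\in T}[P]-q[R]$ has degree $q-q=0$. Its sum is $\sum_{P\in T}P-qR$. The first term is the sum of all elements of a cyclic group of even order $q$, which equals its unique element of order $2$, namely $(q/2)(2R)=qR$. Hence the sum is $qR-qR=0$, and $H$ exists.

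\emph{Residues.} Since $\omega$ is nowhere vanishing and regular, $f=(dH/H)/\omega$ has poles only where $dH/H$ does. At any point $P$ with local parameter $t$, write $H=t^{n}u$ with $u$ a unit. Then
\[
 dH/H=n\,dt/t+du/u .
\]
So $dH/H$ has at most a simple pole at $P$, with residue $n$ taken in $k$. At $P\in T$ we have $n=1$, so $f\omega=dH/H$ has residue $1\neq0$. Because $\omega$ is a unit multiple of $dt$, this means $f$ has a genuine simple pole at each point of $T$.

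\emph{The point $R$ (the only delicate step).} At $R$ we have $n=-q$, which is $0$ in $k$. So $dH/H=du/u$ is regular at $R$, and $f$ has no pole there. We must also check that $R\notin T$. This holds because $R$ has order $2q$, whereas every element of $T$ has order dividing $q$. At all remaining points $n=0$, so $f$ is regular.

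It follows that $f$ has simple poles exactly at the points of $T$, with $\Res_P(f\omega)=1$ for every $P\in T$. As a consistency check, the residue theorem gives $\sum_{P\in T}1=q=0$ in $k$, as it must. By Lemma~\ref{lem:principalparts}, this $f$ is a valid Artin--Schreier function for the construction.
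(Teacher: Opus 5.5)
Your proof is correct and follows essentially the same route as the paper. Both arguments get principality from the fact that the elements of the cyclic group $T$ of even order sum to its $2$-torsion point $qR$, and both use the local identity $dH/H=n\,dt/t+du/u$, in which the coefficient $-q$ at $R$ vanishes in characteristic $2$; your explicit check that $R\notin T$ is a small detail the paper leaves implicit.
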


\begin{proof}
The subgroup $T$ is cyclic of even order $q$, so the sum of its elements
in the group law of $E$ is its unique nonzero point of order $2$, namely
$qR$.  Therefore the degree-zero divisor
\[
 \sum_{P\in T}[P]-q[R]
\]
has trivial class in $\Pic^0(E)$ and is principal.

At a zero $P\in T$ of $H$, the logarithmic differential $dH/H$ has a
simple pole with residue $1$.  At $R$, write locally $H=t^{-q}v$ with
$v$ a unit.  Then
\[
 \frac{dH}{H}=-q\frac{dt}{t}+\frac{dv}{v}=\frac{dv}{v},
\]
because $q=0$ in characteristic $2$.  Hence $dH/H$ is regular at $R$.
Dividing by the nowhere-vanishing differential $\omega$ proves the
claim.
\end{proof}

\begin{proposition}[A genus-$9$ equation over $\F_2$]\label{prop:explicit9}
Let $k=\overline{\F}_2$ and consider the ordinary elliptic curve
\[
 E:\qquad y^2+xy=x^3+1.
\]
Let $\rho\in\F_4\setminus\F_2$ satisfy $\rho^2+\rho+1=0$.  Then
$P_0=(\rho,0)$ has order $8$, and
\[
 T=E[8](k)=E(\F_4)
 =\{O,(0,1)\}\cup
   \{(a,0),(a,a):a\in\F_4^\times\}.
\]
With the invariant differential $\omega=dx/x$, put
\begin{equation}\label{eq:explicit-f9}
 f=\frac{y}{x}+\frac{x^4+x^3+1}{x^4+x}.
\end{equation}
Then $f$ has simple poles precisely at the points of $T$ and
\[
 \Res_P(f\omega)=1\qquad(P\in T).
\]
Consequently the smooth projective model of
\begin{equation}\label{eq:explicit-AS9}
 z^2+z=\frac{y}{x}+\frac{x^4+x^3+1}{x^4+x},
 \qquad y^2+xy=x^3+1,
\end{equation}
is an ordinary curve of genus $9$ and
\[
 \Aut(X)\cong \Dih(C_8)\times C_2.
\]
It is birational to the plane curve
\begin{equation}\label{eq:explicit-plane9}
 \boxed{(x^7+x)(z^4+z)=x^8+x^6+x^4+x^2+1}.
\end{equation}
In particular, this genus-$9$ member of the family has a plane model over
$\F_2$.
\end{proposition}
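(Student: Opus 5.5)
The proof is a direct specialization of Step~1 of Proposition~\ref{prop:sporadic-GK}(a) to $A=1$, after which the final claims follow from Theorem~\ref{thm:main}.

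\emph{Torsion points.} For $E=E_1$ we have $A^{1/4}=A^{1/8}=A^{1/2}=1$. The $x$-coordinates of the six points of $E[8](k)\setminus\{O,T_2\}$ are therefore the roots of
\[
(x+1)(x^2+x+1)=x^3+1,
\]
namely $a\in\F_4^\times$. For such $a$ the equation $y^2+ay=a^3+1=0$ gives $y\in\{0,a\}$, and $T_2=(0,1)$. This identifies $T=E[8](k)$ with the displayed set. Counting points, $|E(\F_4)|=8$ (one checks $x=0$ gives one point, $x\in\F_4^\times$ give two each, plus $O$), so $T=E(\F_4)$. The point $(\rho,0)$ has $x$-coordinate a root of the quadratic factor $x^2+x+1$, hence order exactly $8$ by Step~1.

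\emph{Poles and residues.} I would imitate Step~3 with $P(x)=x^3+1$ and $M(x)=x^4+x^3+1$, so that $xP=x^4+x$ and the second summand of $f$ is $M/(xP)$.
\begin{itemize}
\item At the finite points with $x=a\in\F_4^\times$: $x-a$ is a local parameter, $y/x$ is regular, and the residue of $f\,dx/x$ is $M(a)/(a^2P'(a))$. Since $a^3=1$, $M(a)=a$, and $a^2P'(a)=a^2\cdot 3a^2=a^4=a$, so the residue is $1$.
\item At $T_2=(0,1)$: $M(0)/P(0)=1=\sqrt A$, which is exactly the compatibility used in Step~3. With $s=y+1$ the same local computation gives a simple pole with residue $1$.
\item At $O$: $y/x$ has a simple pole. $M/(xP)$ has degree $0$, so it is regular there (it equals $1$ plus lower order terms), and the pole of $f$ comes only from $y/x$. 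The residue theorem forces the residue at $O$ to be $7\cdot 1=1$.
\item At all other points: $f$ is regular.
\end{itemize}
The cover is therefore $X(E,dx/x,1,8)$ by the independence statement in Theorem~\ref{thm:main}. That theorem and Proposition~\ref{prop:fullaut} then give genus $9$, ordinarity, and $\Aut(X)\cong\Dih(C_8)\times C_2$.

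\emph{Plane model.} I would follow Step~4. Put $W=z^2+z$, so $y=xW+M/P$ after clearing the term $M/(xP)\cdot x$. Substituting into $y^2+xy=x^3+1$ gives
\[
x^2(W^2+W)=x^3+1+(M^2+xPM)/P^2.
\]
One needs the analogue of \eqref{eq:star}: $M^2+xPM=(x^3+1)P^2+xQ^2$ for a suitable $Q$. Since $x^8+x^6+x^4+x^2+1=(x^4+x^3+x^2+x+1)^2$, I would take $Q=x^4+x^3+x^2+x+1$ and verify this identity by expansion over $\F_2$. This is routine, and it is the step most likely to need a correction of $M$ or $Q$. It yields $xP^2(z^4+z)=Q^2$. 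Now $xP^2=x(x^6+1)=x^7+x$, which gives \eqref{eq:explicit-plane9}.

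Birationality follows as in Step~4: $y\in k(x,z)$, so $k(x,z)=k(E)(z)$, which has degree $4$ over $k(x)$, and hence the plane equation is irreducible. The main obstacle is the bookkeeping at $T_2$ and the polynomial identity; everything else is copied from the proof of Proposition~\ref{prop:sporadic-GK}(a).
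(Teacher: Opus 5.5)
Your proposal is correct and follows essentially the paper's proof. The same ingredients appear in both: the residues at the points $(a,b)$ with $a\in\F_4^\times$, the residue at $(0,1)$ via $s=y+1$, the residue at $O$ via the residue theorem, then the substitution $y=xW+M/P$ and the degree-$4$ birationality argument. The only cosmetic difference is that you read off the order of $P_0$ from Step~1 of Proposition~\ref{prop:sporadic-GK}(a) with $A=1$, whereas the paper computes $[2]P_0=(1,0)$ and $[4]P_0=(0,1)$ directly.

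The identity you were unsure about needs no correction: with $M=x^4+x^3+1$, $P=x^3+1$ and $Q=x^4+x^3+x^2+x+1$, both sides of $M^2+xPM=(x^3+1)P^2+xQ^2$ equal $x^7+x^6+x^5+x+1$.
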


\begin{proof}
The curve $E$ is ordinary, and a direct use of the chord--tangent law gives
\[
 [2](\rho,0)=(1,0),\qquad [4](\rho,0)=(0,1).
\]
Since $(0,1)$ is the nontrivial $2$-torsion point, $P_0$ has order $8$.
The displayed set has eight points and is exactly $\langle P_0\rangle$.

We verify the principal parts of $f$.  First let $P=(a,b)$ with
$a\in\F_4^\times$.  Then $a^3=1$, the coordinate $x-a$ is a local
parameter at $P$, and the second term of \eqref{eq:explicit-f9} is the
only singular term.  Since
\[
 x^4+x=x(x^3+1),\qquad
 \frac{d}{dx}(x^4+x)=1,
\]
and $a^4+a^3+1=a$, one obtains
\[
 \Res_P(f\omega)=\frac{a}{a}=1.
\]
This accounts for the six points of $T$ with nonzero $x$-coordinate.

At $P=(0,1)$ put $s=y+1$.  The equation of $E$ becomes
\[
 s^2=x(1+s+x^2),
\]
so $s$ is a local parameter and
\[
 \omega=\frac{dx}{x}=\frac{ds}{1+s+x^2}.
\]
Writing \eqref{eq:explicit-f9} over the common denominator
$x(1+x^3)$ gives
\[
 f=\frac{s(1+x^3)+x^4}{x(1+x^3)}.
\]
Therefore
\[
 f\omega
 =\left(\frac1s+
   \frac{x^4}{s^2(1+x^3)}\right)ds.
\]
As $\operatorname{ord}_s(x)=2$, the second summand is regular, and the
residue is again $1$.  Finally, at $O$ one has
$\operatorname{ord}_O(x)=-2$ and $\operatorname{ord}_O(y)=-3$, so $f$
has a simple pole there and no further poles occur.  The residue theorem,
together with the seven finite residues already computed, gives
$\Res_O(f\omega)=1$.

Thus \eqref{eq:explicit-AS9} is the case $q=8$, $\lambda=1$ of the main
construction, and Theorem~\ref{thm:main} yields the genus, ordinarity, and
full automorphism group.

For the plane equation set $W=z^2+z$.  From
\eqref{eq:explicit-AS9},
\[
 y=xW+\frac{x^4+x^3+1}{x^3+1}.
\]
Substitution in $y^2+xy=x^3+1$ and simplification in characteristic $2$
gives
\[
 W^2+W
 =\frac{x^8+x^6+x^4+x^2+1}{x^7+x}.
\]
Since $W^2+W=z^4+z$, this is precisely
\eqref{eq:explicit-plane9}.  Conversely, the displayed expression for $y$ shows that the function
field defined by \eqref{eq:explicit-plane9} contains the function field
of \eqref{eq:explicit-AS9}.  The latter has degree $4$ over $k(x)$,
while \eqref{eq:explicit-plane9} has degree $4$ in $z$.  Hence the
polynomial in $z$ in \eqref{eq:explicit-plane9} is irreducible over
$k(x)$ and the two function fields coincide.  Thus the two models are
birational.
\end{proof}

\begin{remark}[The case $q=4$]\label{rem:smallq}
The residue construction also works for $q=4$ and gives an ordinary
curve of genus $5$ with a subgroup
\[
        S\cong\Dih(C_4)\times C_2
\]
of order $16=4(g-1)$.  The same construction shows that $S$ has no
global fixed point, while the central deck involution $u$ has elliptic
quotient and hence is not inductive.  Moreover $S$ is not dihedral,
because its center has order $4$ whereas the center of a dihedral group
of order $16$ has order $2$, and $16\ne2g+2=12$.  Thus cases
\textup{(ia)}, \textup{(ii)}, and \textup{(iii)} of
Theorem~\ref{thm:GKclassification} are excluded, so the constructed
curve is of type \textup{(ib)}.

Theorem~\ref{thm:converse} shows conversely that every curve of type
\textup{(ib)} of genus $5$ arises in this way.  Proposition~\ref{prop:sporadic-GK}
provides a direct verification for the genus-$5$ example arising from
\cite[Sections~6.4--6.5]{GK2015}.  The uniqueness argument for the
bielliptic involution does not apply in genus $5$, so we do not claim
that the parameters $(A,c)$ are unique.  Nakajima's bound shows only
that the displayed group is a Sylow $2$-subgroup of the full
automorphism group.  We therefore do not extend the assertion on the
full automorphism group to $q=4$.
\end{remark}

\section*{Acknowledgments}

The author was partially supported by CNPq (grant no.~302774/2025-4),
FAEPEX (grant no.~3485/25), and FAPESP (grant no.~2024/00923-6).

\section*{Declaration on the use of generative AI}

During the preparation of this manuscript, the author used ChatGPT (OpenAI) as an auxiliary tool for language editing, improving exposition, and checking intermediate mathematical arguments. All statements, proofs, computations, citations, and conclusions were subsequently reviewed and verified by the author, who takes full responsibility for the content of the paper.

\bigskip
\noindent
\textit{Departamento de Matem\'atica, IMECC, Universidade Estadual de Campinas,\\
Rua S\'ergio Buarque de Holanda 651, Campinas, SP 13083-859, Brazil}\\
\texttt{saeed@unicamp.br}

\end{document}